\newtheoremstyle{pak}{9pt}{9pt}{\itshape}{}{\bfseries}{}{.5em}{}
\theoremstyle{pak}
\newtheorem{thm}{Theorem}[section]
\newtheorem{cor}[thm]{Corollary}
\newtheorem{lem}[thm]{Lemma}
\newtheorem{sublem}[thm]{Sublemma}
\newtheoremstyle{defin}
  {9pt}{9pt}{}{}{\bfseries}{}{.5em}{}
\theoremstyle{defin}
\newtheoremstyle{exm}
  {9pt}{9pt}{}{}{\scshape}{}{.5em}{}
\theoremstyle{exm}
\newtheoremstyle{proof}
  {}{}{}{}{\itshape}{:}{.5em}{}
\theoremstyle{proof}
\def\cC{\mathcal C}
\def\co{\mathcal O}
\def\cR{\mathcal R}
\def\cT{\mathcal T}
\def\cT{\mathbf{T}}
\def\<{\langle}
\def\>{\rangle}
\def\width{\text{{\bf wd\ts }}}
\def\height{\text{{\bf ht\ts }}}
\def\0{{\mathbf 0}}
\def\implies{\cRightarrow}
\def\.{\hskip.06cm}
\def\ts{\hskip.03cm}
\def\T{{{\mathbf{T}}}}
\def\R{{{\mathbf{R}}}}
\def\W{{{\mathbf{W}}}}
\def\M{{{\mathfrak{M}}}}
\def\l{\ell}
\def\co-NP{\textup{co-NP}}
\def\NP{\textup{NP}}
\def\SP{\textup{\#P}}
\def\G{\Gamma}
\def\cC{\mathcal{C}}
\def\cT{\mathcal{T}}
\def\cR{\mathcal{R}}
\def\Z{\mathbb{Z}}
\def\N{\mathbb{N}}
\def\e{e}
\def\p{\pi}
\def\t{\tau}
\def\s{\sigma}
\newcommand{\abs}[1]{\left\lvert#1\right\rvert}
\newcommand{\type}{t}
\newcommand{\latin}[1]{\textsl{#1}}
\def\implies{\Rightarrow}
\newcommand{\problem}[1]{\textsc{#1}}
\newcommand{\sct}[1]{\problem{Simply Connected $#1$-Tileability}}
\newcommand{\problemdef}[3]{
\medskip
\begin{tabular}{ll}
\multicolumn{2}{l}{\problem{#1}}\\
\textbf{Instance:} & #2 \\
\textbf{Decide:} & #3
\end{tabular}\medskip}
\title[Tiling simply connected regions with rectangles]{Tiling simply connected regions with rectangles}
\author[Igor~Pak]{ \ Igor~Pak$^\star$}
\author[Jed~Yang]{ \ Jed~Yang$^\star$}
\thanks{\thinspace ${\hspace{-.45ex}}^\star$Department of Mathematics, UCLA, Los Angeles, CA 90095, USA; \.
\texttt{\{pak,jedyang\}@math.ucla.edu}}
\date{July 4, 1776}
\begin{document}
\date{}

\begin{abstract}
In~\cite{BNRR}, it was shown that tiling of general regions with
two rectangles is NP-complete, except for a few trivial special cases.
In a different direction, R\'{e}mila~\cite{Rem2} showed that for
simply connected regions by two rectangles, the tileability can be
solved in quadratic time (in the area).  We prove that there is a finite
set of at most $10^6$ rectangles for which the tileability problem of
simply connected regions is \NP-complete, closing the gap between positive
and negative results in the field.  We also prove that counting such
rectangular tilings is \SP-complete, a first result of this kind.
\end{abstract}
\maketitle

\section{Introduction}
The study of \emph{finite tilings} is a classical subject of
interest in both theoretical and recreational literature~\cite{Gol1,GS}.
In the \emph{tileability problem}, a finite set of tiles~$\T$~is fixed,
and a region is an input.  This problem is known to be polynomial
in some cases, and \NP-complete in others (see~\cite{pak-horizons}).
Over the years, the hardness results were successively simplified
(in statement, not in proof), with both sets of tiles and the
regions becoming more restrictive.  This paper is a new step
in this direction.

In~\cite{BNRR}, it was shown that tiling of general regions
with two bars is NP-complete, except for the case of dominoes.
In a different direction, R\'{e}mila~\cite{Rem2} (building on
the ideas in~\cite{KK,Thu}), showed that for \emph{simply connected
regions} and two rectangles, the tileability can be solved in
quadratic time (in the area).  The following theorem closes the gap
between these polynomial and \NP-complete results.

\begin{thm}[Main Theorem] \label{thm-rect}
There exists a finite set $\R$ of at most $10^6$ rectangular
tiles, such that the tileability problem of simply connected
regions with~$\R$ is \NP-complete.
\end{thm}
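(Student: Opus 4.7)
The plan is to prove the theorem by reduction from a planar satisfiability problem, combined with a careful rectangular gadget construction. Membership in \NP\ is immediate: a tiling itself is a polynomial-size certificate, verifiable by checking that every tile belongs to $\R$ and that the tiles together partition the region.

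For hardness, the strategy is to reduce from \textsc{Planar 3-SAT} (or \textsc{Planar Circuit-SAT}), chosen so that the entire formula can be realized inside a simply connected planar region without the need for a crossover gadget. Each variable, wire, junction, and clause of the formula is realized by a local rectangular gadget. A \emph{variable gadget} is a subregion admitting exactly two tilings, designated ``true'' and ``false'', naturally modeled on a strip with two shifted brick patterns. A \emph{wire gadget} is a long thin region that inherits this shift and transmits it along its length, accommodating turns and branchings through small combinatorial sub-gadgets. A \emph{clause gadget} connects three wires and admits a tiling if and only if at least one input arrives in the ``true'' state. All gadgets are glued along their boundaries into a single simply connected region whose tilings by rectangles from $\R$ correspond bijectively to the satisfying assignments of the formula.

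The heart of the construction is the design of these gadgets using \emph{only} rectangles, which cannot interlock in the jigsaw fashion available for general polyominoes. The allowed alphabet of up to $10^6$ rectangle sizes would be used as a system of ``keys'': rectangles whose specific dimensions force them into unique positions, thereby propagating the intended Boolean state through the gadget. A very small ``fluid'' tile (such as a $1\times 2$ domino, possibly in several copies at shifted sizes) would serve as bulk filler, while the large rectangles function as the constraint-enforcing skeleton, in the spirit of a Wang-tile simulation. A rough accounting suggests that a constant number of rectangle sizes per logical element of the formula suffices, and the $10^6$ bound should follow comfortably after a careful enumeration.

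The main obstacle is the simply connected constraint. Standard tiling \NP-hardness proofs place holes in the region to localize constraints; without holes, constraints must be enforced purely by boundary shape and tile interlocking. To prove correctness I would need a carefully engineered invariant --- most likely a coloring or height-function argument of the type used by Kenyon--Kenyon and Thurston, and underlying R\'{e}mila's algorithm for two rectangles --- showing that each gadget behaves locally as specified regardless of what the rest of the region does, and that no ``rogue'' tiling can exploit the large rectangle alphabet to satisfy an unsatisfiable instance. Once gadget correctness is established and a planar embedding of the formula is fixed, assembling the gadgets into a single simply connected region and verifying the $10^6$ bound on rectangle sizes should be a matter of careful bookkeeping.
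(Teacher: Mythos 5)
Your proposal correctly identifies that membership in \NP\ is trivial and that the real work is gadget design under the simple-connectivity constraint, but what you have written is a plan that defers exactly the two ideas that make the proof go through. First, the paper does not build logical gadgets directly out of rectangles; it factors the reduction through an intermediate tileset. One first proves (First Reduction Lemma) that a fixed set of about $23$ generalized Wang tiles makes simply connected tileability \NP-complete --- here the region is a single colored rectangle whose boundary colors force a unique ``skeleton'' tiling, with wires, crossover tiles, variable tiles and clause tiles, reducing from \problem{Cubic Monotone $1$-in-$3$ SAT} (crossovers are not avoided via planarity, they are \emph{placed} at boundary-prescribed locations). Second, and this is the step your proposal leaves entirely open, one proves a general Second Reduction Lemma converting \emph{any} set of $k$ Wang tiles with $c$ colors into at most $8(k+4c)^2$ rectangles: a base region $\G_0(r,c)$ with protrusions and cavities admits a unique tiling by five large, mutually comparable rectangles, and after scaling by $M$ one perturbs tile dimensions by distinct powers of~$5$ so that each interior tile can shift by $5^i$ to ``represent'' Wang tile $\t_i$, with the stretchable neighbor tiles encoding exactly the allowed adjacency relations (the base-$5$ sublemmas rule out spurious shifts). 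The $10^6$ bound is then $8(133+36)^2$, not a per-gadget count.

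Two specific points in your sketch would actively fail. Your proposed ``very small fluid tile (such as a $1\times 2$ domino) serving as bulk filler'' is the opposite of what is needed: small filler tiles generate enormous numbers of rogue tilings and destroy both correctness and parsimony, whereas the paper's construction uses only large rectangles whose sizes force an essentially rigid tiling with precisely controlled degrees of freedom. And your hope of proving gadget correctness via a height-function or coloring invariant in the style of Thurston and Kenyon--Kenyon points the wrong way: those tools underlie the \emph{positive} (polynomial-time) results and depend on local move connectivity, which fails for three or more rectangles (this is exactly why the hardness result is possible, as the paper remarks in Subsection~\ref{ss:fin-three-rect}). Correctness in the paper is instead established by a direct forcing argument on bounded segments (Sublemma~\ref{sl:unique-rect}) showing the base tiling is unique, plus elementary arithmetic in base~$5$. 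Without the Wang-tile intermediate step and the expansion/perturbation encoding, the ``careful bookkeeping'' you invoke has nothing to book-keep.
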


Our proof of the Main Theorem is split into two parts.  In the
first part, we use the language of \emph{Wang tiles} to reduce the
\problem{Cubic Monotone $1$-in-$3$ SAT} problem, known to be \NP-complete,
to the $\T$-tileability of simply connected regions with Wang tiles.
In the second part, we reduce Wang tileability to tileability with
rectangular tiles.  Both our reductions are \emph{parsimonious} and are
used to prove that counting the number of tilings of simply connected
regions is also hard, via reduction from~\problem{2SAT}.

\begin{thm} \label{thm-sharp}
There exists a finite set $\R$ of at most $10^6$ rectangular tiles,
such that counting the number of tilings of simply
connected regions with~$\R$ is \SP-complete.
\end{thm}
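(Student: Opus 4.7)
The plan is to deduce Theorem~\ref{thm-sharp} from the reduction machinery developed for the Main Theorem, after ensuring that the reductions can be made \emph{parsimonious}, i.e.\ that they induce an explicit bijection between the solution sets. Membership in \SP{} is routine: given a candidate tiling of a region of area $n$ encoded as a list of tile placements, one verifies in polynomial time that each tile lies in~$\R$ and that the placements partition the region with no overlap. So the remainder of the work is hardness.

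For hardness I would reduce from \#\problem{2SAT}, which is \SP-complete by Valiant's classical theorem. The second reduction used for Theorem~\ref{thm-rect} — replacing each Wang tile by a rectangular gadget — is inherently parsimonious, since the gadget is forced to tile in exactly one way once its boundary matchings are fixed; hence every Wang tiling of the simply connected region lifts to a unique rectangular tiling with $\R$. Therefore it suffices to exhibit a parsimonious polynomial-time reduction from \#\problem{2SAT} to counting Wang tilings of a simply connected region with some finite Wang set $\T$. Composing the two reductions then yields a parsimonious reduction to counting rectangular tilings, and \SP-hardness follows.

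The \#\problem{2SAT} reduction itself would reuse the wire, turn, fan-out, and variable gadgets from the proof of Theorem~\ref{thm-rect}, each of which is already designed so that for every fixed Boolean assignment of its terminals there is a unique consistent Wang tiling of the gadget; these are automatically parsimonious. The only new ingredient needed is a 2-clause gadget: for a clause $x \vee y$, it must admit exactly one Wang tiling under each of the three satisfying inputs and no tiling under the falsifying input. I expect the main obstacle to lie in designing this clause gadget while preserving simple connectedness of the composite region — the very constraint that rules out the ``insert a hole'' trick available in the non-simply-connected setting. A natural realization is a small rectangular patch whose boundary labels on the $x$- and $y$-sides leave one corner unfillable precisely when both inputs encode \textsc{false}, while the other three label combinations propagate uniquely through the interior.

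Once the clause gadget is in hand, the planar layout used for Theorem~\ref{thm-rect} — variables connected to the clauses they occur in by non-crossing wires routed on a grid, with fan-outs used to duplicate variable values — carries over verbatim, and the global region remains simply connected. Counting the Wang tilings of the resulting region then matches the number of satisfying assignments of the \#\problem{2SAT} instance on the nose. Composition with the parsimonious Wang-to-rectangle reduction yields Theorem~\ref{thm-sharp}.
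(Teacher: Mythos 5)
Your proposal is correct and follows essentially the same route as the paper: reduce from \problem{\#2SAT} (\SP-complete by Valiant), modify the variable and clause gadgets of the Wang-tile construction so that each $2$-clause admits exactly one tiling for each of its three satisfying inputs and none for the falsifying one, and compose with the Second Reduction Lemma, whose Wang-to-rectangle transformation is already parsimonious. Two details to adjust: the layout does \emph{not} use non-crossing wires (the variable--clause incidence graph of a \problem{2SAT} instance need not be planar, so crossings are unavoidable and are handled by the crossover tiles, exactly as in the proof of Theorem~\ref{thm-rect}), and general \problem{2SAT} clauses contain negated literals, which your gadget list does not account for --- the paper fixes this by modifying the variable tile to emit both unnegated and negated copies via a boundary color sequence, though reducing from monotone \problem{\#2SAT} (also \SP-complete) would let you avoid negation altogether.
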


Although \SP-completeness is known for tilings of general
regions with right tromino and square tetromino~\cite{MR},
nothing was known for tilings with rectangles.
We refer to
Section~\ref{s:fin} for the history of the problem,
references, and further remarks.

\medskip

\section{Definitions and basic results}\label{s:def}

\subsection{Ordinary tiles}
Consider the integer lattice $\Z^2$ as a union of closed unit squares with pairwise disjoint interiors.
A \emph{region} is a finite union of such unit squares such that the interior is connected.
An (\emph{ordinary}) \emph{tile} is a finite simply connected region.


A \emph{tileset} $\T$ is a collection of tiles.
Given a region $\G$ and a tileset $\T$, a \emph{$\T$-tiling} of $\G$ is a union
of translated copies of tiles from $\T$ with pairwise disjoint interiors covering $\G$.
If a region admits a $\T$-tiling then it is \emph{$\T$-tileable}.
We may simply say \emph{tiling} and \emph{tileable} when $\T$ is understood.
Consider the following decision problems regarding tileability:

\problemdef{Simply Connected Tileability}
{Simply connected region $\G$, finite tileset $\T$.}
{Whether $\G$ is $\T$-tileable?}

\problemdef{\sct{\T}}
{Simply connected region $\G$.}
{Whether $\G$ is $\T$-tileable?}

An input region can be given by the (finite) union of the squares it contain.
The following is one of the early \NP-completeness results~\cite{GJ}.

\begin{thm}\label{thm-2-dim}
If both region $\G$ and tileset $\T$ are part of the input,
\problem{Simply Connected Tileability} is \NP-complete in the plane.
\end{thm}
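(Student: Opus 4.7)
The plan is to establish this classical result (originally in~\cite{GJ}) by proving (a)~membership in \NP{} via a polynomial-size certificate, and (b)~\NP-hardness by reduction from a standard Boolean satisfiability problem.

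Membership in \NP{} is routine. Given an instance $(\G,\T)$, a tiling is presented by listing, for each tile placed, its type in~$\T$ and the translation vector placing it inside~$\G$. The number of tiles is at most the area $|\G|$, so this description has size polynomial in the input. Verification---checking that the listed tiles lie in~$\G$, have pairwise disjoint interiors, and cover~$\G$ exactly---takes polynomial time.

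For \NP-hardness I would reduce from \problem{3-SAT}. Given a formula $\phi$ on variables $x_1,\dots,x_n$ and clauses $C_1,\dots,C_m$, the plan is to construct a simply connected region $\G_\phi$ together with a finite tileset $\T_\phi$ built from three families of gadget tiles: \emph{variable tiles}, which allow a variable subregion to be tiled in exactly two essentially different ways representing \emph{true} and \emph{false}; \emph{wire tiles}, which force the chosen truth value to propagate rigidly along corridors leading to each occurrence of the literal; and \emph{clause tiles}, which admit a local completion iff at least one incident wire delivers the value \emph{true}. The gadgets sit on a coarse rectangular grid, with variables along a bottom row, clauses along a top row, and wires routed monotonically between them, with crossover gadgets handling unavoidable intersections. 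Carefully chosen shape-and-parity mismatches rule out illegitimate tile placements, so that $\G_\phi$ is $\T_\phi$-tileable iff $\phi$ is satisfiable, and the total size of the construction is polynomial in $n+m$.

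The principal obstacle is ensuring that the constructed region is \emph{simply connected}. Naive planar SAT-to-tiling reductions easily create interior holes around clause and crossover gadgets, and one must arrange that the complement of $\G_\phi$ in its bounding box remains connected to the unbounded exterior. I would address this by equipping each gadget with a thin ``vent'' channel that joins any interior empty space to an ambient corridor running along one side of the construction, absorbing the hole without disturbing the logical semantics enforced by the tile shapes. Once this is done, the verification that the reduction is polynomial and that $\G_\phi$ is a valid simply connected region is a straightforward accounting.
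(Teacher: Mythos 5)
The paper does not actually supply a proof of this statement: Theorem~\ref{thm-2-dim} is quoted from~\cite{GJ}, where it was only announced (referencing an unpublished preprint), and in the present paper it is subsumed by the much stronger fixed-tileset results (Theorem~\ref{thm-ordinary} and the Main Theorem), which are proved by encoding \problem{Cubic Monotone $1$-in-$3$ SAT} into the \emph{boundary} of a rectangular region tiled by a fixed set of Wang tiles. So your route --- a formula-dependent tileset with variable, wire, clause, and crossover gadgets --- is genuinely different from anything in the paper, and it is the natural one for this weaker statement: since $\T$ is part of the input, you may design tiles whose jagged shapes fit only in designated slots, which makes enforcing the logic far easier than in the fixed-tileset setting. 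Your \NP-membership argument is complete and matches the paper's remark that all these problems are trivially in \NP.

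That said, as written the hardness half is an outline rather than a proof, and the gaps sit exactly where such constructions usually fail. No gadget is actually exhibited; in particular the crossover gadget and the claim that ``shape-and-parity mismatches rule out illegitimate tile placements'' are the entire content of the reduction and cannot be taken on faith, because a tile introduced for one gadget is available everywhere in $\G_\phi$ and may admit unintended placements that decouple the truth assignment from the clause checks. The ``vent channel'' fix for simple connectivity has the same difficulty: an open corridor joining an interior cavity to the exterior must itself be tiled, and you must verify that it cannot be tiled in a way that absorbs or leaks the nondeterminism of the adjacent gadget. A cleaner way to guarantee simple connectivity --- and the one implicit in the paper's machinery --- is to make $\G_\phi$ a rectangle with a jagged boundary and push all instance-dependence into boundary notches that select which tiles fit where (the shape-theoretic analogue of the paper's boundary colors); then simple connectivity is automatic and no vents are needed.
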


For the rest of the paper,
we will focus on finding a fixed $\T$ such that \sct{\T} is \NP-complete.
The following result is an extension of Theorem~\ref{thm-2-dim}.
\begin{thm}\label{thm-ordinary}
There exists a set $\T$~of~$\ts 23$ tiles, such that \sct{\T} is \NP-complete.
\end{thm}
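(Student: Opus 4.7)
The plan is to reduce \problem{Cubic Monotone 1-in-3 SAT} to \sct{\T} via a two-stage construction: first a Wang tiling of a simply connected region, and then a geometric realization of the Wang tiles as ordinary polyomino tiles.

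First I would design a small Wang tileset $\W$ together with gadget regions implementing (i)~a \emph{wire corridor} admitting exactly two tilings for its two endpoint values, encoding a signal $0$ or $1$; (ii)~a \emph{turn} routing a wire through a $90^\circ$ bend; (iii)~a \emph{fan-out} node with a unique tiling per input, emitting three copies of the signal; (iv)~a \emph{crossover} letting two wires pass without interacting, which is needed because the incidence graph of a general 1-in-3 SAT instance need not be planar; and (v)~a \emph{clause checker} that is $\W$-tileable iff exactly one of its three incoming signals is~$1$. Given a 1-in-3 SAT instance, I would place variables, wires, crossovers, and clauses on a rectangular grid of gadget cells to build one simply connected polyomino~$\G$ of polynomial size whose $\W$-tilings biject with satisfying assignments.

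Next I would convert $\W$ into an ordinary tileset $\T$ of the same cardinality by the standard bumps-and-notches technique: each Wang color becomes a distinctive pattern of small protrusions and indentations on a tile edge, arranged so that two edges mate iff the corresponding Wang colors agreed. Every individual Wang tile is thereby realized as a simply connected polyomino, and the region~$\G$ is enlarged by a constant factor but remains simply connected. Careful bookkeeping across the wire, turn, fan-out, crossover, and clause gadgets is what should yield the bound of~$23$ tiles. Since every gadget is rigidly tileable once its inputs are fixed, the reduction is parsimonious, which would also serve the \SP-completeness claim of Theorem~\ref{thm-sharp} via a reduction from \problem{2SAT} counting.

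The main obstacle is the interaction between simple connectivity and the crossover. Most classical tileability reductions, including Theorem~\ref{thm-2-dim}, exploit holes to pin down long-range matching, and simulating a wire crossing by purely local tile constraints inside a hole-free region is delicate. I expect the crossover and fan-out gadgets to dominate the tile count, so the core combinatorial work is finding compact realizations of these two gadgets that keep the total at or below~$23$ while remaining rigid enough to force one tiling per satisfying assignment.
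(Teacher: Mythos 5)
Your high-level architecture matches the paper's: reduce \problem{Cubic Monotone $1$-in-$3$ SAT} to Wang tileability of a simply connected region, then convert the Wang tiles into ordinary polyominoes by the bumps-and-notches trick (which, as in the paper's reduction (iii)$\implies$(ii) of the Tileability Equivalence Lemma, preserves the number of tiles). But there is a genuine gap at exactly the point you flag as ``the main obstacle'': you give no mechanism that \emph{forces} the wire, crossover, and clause gadgets to appear at the prescribed positions inside a hole-free region. With a fixed tileset, the only information carrier is the boundary of the input region (its shape, and for Wang tiling its edge colors); you cannot simply ``place gadgets on a rectangular grid of cells,'' because nothing in the interior distinguishes one cell from another, and carving the wires out as corridors would destroy simple connectivity by leaving holes where the blocks between corridors are removed. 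Without a forcing mechanism the reduction is not even sound: spurious tilings ignoring your intended layout could exist, and tileability would no longer track satisfiability.

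The paper resolves this with a control structure that your proposal lacks. The region is a single rectangle whose top boundary color sequence encodes the instance: the routing of the three signal copies of each variable to its clauses is viewed as a permutation in $S_{3n}$, decomposed into adjacent transpositions, and each transposition $s_k$ is written on the top boundary as a color word $c_k=01(02)^{k-1}63$. Seven small control tiles then perform a deterministic countdown from the boundary into the interior, and Sublemma~\ref{sl:unique-wang} shows the control layer has a \emph{unique} tiling, placing each crossover tile at the prescribed depth. This design also eliminates your turn and fan-out gadgets: all wires are horizontal rows (the ``parity'' of a tile, i.e.\ the color of its vertical edges, is constant along a row and carries the signal), and fan-out is built into a single tall variable tile spanning three rows, exploiting the cubic hypothesis. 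The count of $23$ is then exact ($7$ small tiles in two parities, plus $4$ crossover, $2$ variable, and $3$ clause variants), rather than an aspiration. To repair your proposal you would need to supply an analogous boundary-driven rigidity argument; without it, neither correctness nor the bound of $23$ follows.
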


The proof follows an explicit construction of Wang tiles (see below).
While we do not use Theorem~\ref{thm-ordinary}, it is of independent interest, 
and the intermediate results in its proof provide a key step towards the proof 
of the Main Theorem.  The history behind this theorem and its potential 
generalizations is outlined in Subsection~\ref{ss:fin-levin}.

\subsection{Wang tiles}
The \emph{edges} of an ordinary tile are the unit-length edges on the boundary.
Given a set of colors and an ordinary tile $\t$, a \emph{generalized Wang tile} is an assignment of colors to the edges of $\t$.
Note that an (\emph{ordinary}) \emph{Wang tile} is a generalized Wang tile of a unit square.
The region $\G$ we are trying to tile will also have specified colors on its boundary.
A region is (\emph{Wang}) \emph{tileable} if there is a tiling where incident edges have the same color, including on the boundary of the region (see Figure~\ref{wang-tiling}).
If a tileset consists of (generalized) Wang tiles, tileability always mean Wang tileability.
\begin{figure}[hbtp]
   \includegraphics[scale=0.4]{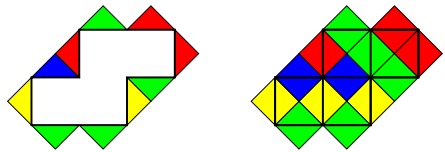}
   \caption[A colored region and a Wang tiling.]{A colored region (left) and a Wang tiling (right). Colored edges are drawn as triangles for visibility.}
   \label{wang-tiling}
\end{figure}

\subsection{Relational Wang tiles}
Let us consider a more general setting.  A set of \emph{relational Wang tiles}
is a collection $\W$ of squares and the following data.  The vertical
(respectively horizontal) \emph{Wang relation} $V_\W(\t,\t')$
(respectively $H_\W(\t,\t')$) specify that $\t'\in\W$ is allowed to be placed
immediately below (respectively to the right of) $\t\in\W$.
We suppress the subscripts when it can be understood from context.
The \emph{boundary tiles} of a region $\G$ is a map from the exterior
edges of $\G$ to the tiles~$\W$.
By abuse of language, we define the notion of tiling in this context:
\ts a \emph{$\W$-tiling} of a region $\G$ is a map $\p:\G\to W$ such
that tiles placed next to each other satisfy the Wang relations.
Whenever a tile is adjacent to an exterior edge, we check the
Wang relations as if the boundary tile corresponding to the edge
is on the other side of the edge.

\subsection{Complexity}
Throughout the paper we consider many tiling problems that are \NP-complete.
All these problems are trivially in~\NP.
Indeed, given a description of a tiling, one could simply check if it is in fact a tiling.
To prove \NP-hardness, we reduce a known \NP-complete problem to the problem in question.
We refer to~\cite{GJ,Pap} for definitions and details.

We will embed \problem{Cubic Monotone $1$-in-$3$ SAT} as a tiling problem.
Let $X=\{x_1,\ldots,x_n\}$ be a set of boolean variables.
A (\emph{monotone $1$-in-$3$}) \emph{clause} $C$ is a set of three variables.
A (\emph{cubic monotone $1$-in-$3$}) \emph{expression} $E$ is a finite collection
$\cC$ of monotone $1$-in-$3$ clauses, where each variable $x_i\in X$ occurs three times.
We say such $E$ is (\emph{$1$-in-$3$}) \emph{satisfiable} if there is an
assignment of boolean values $\{0,1\}$ to the variables $x_i\in X$
such that each clause in $E$ contains precisely one variable receiving~$1$
(and thus two variables receiving~$0$).

\problemdef{Cubic Monotone $1$-in-$3$ SAT}
{Set $X$ of variables, cubic monotone expression $E$.}
{Whether $E$ is $1$-in-$3$ satisfiable?}

The following result was shown by Gonzalez in the language of exact covers:

\begin{thm}[\cite{Gon}]\label{thm-mr}
\problem{Cubic Monotone $1$-in-$3$ SAT} is \NP-complete.\footnote{Given an expression $E$, we can associate a bipartite graph $G$ with vertex set $X\sqcup\cC$,
where a variable $x\in X$ is adjacent to a clause $C\in\cC$ if $x\in C$.
Moore and Robson showed something stronger in \cite{MR}, that this problem is \NP-complete even if we require the associated graph to be planar.
They did this by reducing from \problem{Planar $1$-in-$3$ SAT}, which is \NP-complete \cite{Lar,MuR}.
However, we do not need to use the planar version.}
\end{thm}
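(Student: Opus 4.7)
The plan is to establish membership in \NP\ immediately (a satisfying assignment is a polynomial-size certificate checkable in linear time) and then reduce from Schaefer's general \problem{$1$-in-$3$ SAT}, which is classically \NP-complete. The reduction proceeds in two stages: first, eliminate negations to obtain a monotone instance; second, locally modify the instance to force every variable to appear in exactly three clauses.

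\emph{Eliminating negations.} For each original variable $x$, introduce two fresh monotone variables $x^+$ and $x^-$ representing $x$ and $\bar x$ respectively. Each original clause is rewritten by replacing $x$ with $x^+$ and $\bar x$ with $x^-$. To enforce $x^+ + x^- = 1$, I add a short monotone gadget involving an ``always-false'' variable $F$ and the clause $\{x^+, x^-, F\}$, where the value of $F$ is pinned by a small consistency subgadget shared across all variables. A $1$-in-$3$ satisfying assignment of the resulting monotone instance is then in natural bijection with a $1$-in-$3$ satisfying assignment of the original.

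\emph{Making the instance cubic.} For each variable $y$ appearing in $k$ clauses after the first stage, I normalize the occurrence count to exactly three. If $k > 3$, split the $k$ occurrences into fresh copies $y_1, \ldots, y_k$ and enforce $y_1 = \cdots = y_k$ using an \emph{equality gadget}: for each pair to be equated, the clauses $\{y_i, a, b\}$ and $\{y_j, a, b\}$ with fresh $a, b$ yield $a + b = 1 - y_i = 1 - y_j$, hence $y_i = y_j$; chaining these around a cycle forces global equality. If $k < 3$, pad $y$ with an \emph{absorber} gadget that admits a monotone $1$-in-$3$ extension regardless of $y$'s value, bringing its count up to three.

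The principal obstacle is the bookkeeping in the cubic stage: each correction gadget introduces fresh auxiliary variables which themselves may have the wrong number of occurrences, threatening an unbounded cascade. The resolution is to design each gadget so that its own auxiliary variables already have exactly three prescribed occurrences (or to introduce balanced pairs whose occurrence deficits cancel), ensuring that the construction closes after a constant number of iterations per original variable and that the total size remains polynomial in the input. By verifying that every gadget preserves the number of satisfying assignments, the reduction is moreover \emph{parsimonious}, an observation needed later for the \SP-completeness statement of Theorem~\ref{thm-sharp}.
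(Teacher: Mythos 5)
The paper does not prove this statement at all: it is quoted from Gonzalez~\cite{Gon} (with the footnote pointing to the stronger planar version in~\cite{MR}), so your proposal is a from-scratch argument where the paper offers only a citation. Your overall route --- reduce from \problem{$1$-in-$3$ SAT}, monotonize, then regularize occurrences to three per variable --- is the standard one, but as written it has two genuine gaps. First, the hard part is exactly the part you defer. The equality gadget $\{y_i,a,b\},\{y_j,a,b\}$ does force $y_i=y_j$, and chaining it in a cycle leaves each $y_i$ with three occurrences, but every fresh pair $a,b$ then occurs only twice, the shared pinned variable $F$ occurs once per original variable (so far from three times), and the ``absorber'' and ``balanced pairs whose deficits cancel'' are never constructed. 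Since clauses are \emph{sets} of three distinct variables, you cannot pad by repeating a literal, and the natural triangle gadget $\{F,a,b\},\{F,b,c\},\{F,a,c\}$ pins $F$ to \emph{true}, not false, so even the negation-elimination stage needs an explicit two-step pinning construction. Until these gadgets are exhibited with their own occurrence counts closed off, the proof is an outline of the right shape rather than a proof.

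Second, the parsimony claim is false for the gadgets you did specify, and it contradicts the paper. In $\{y_i,a,b\},\{y_j,a,b\}$ the number of extensions to $(a,b)$ is $1$ when $y_i=1$ and $2$ when $y_i=0$, so the reduction multiplies the count by a value-dependent factor and is not parsimonious. This matters beyond aesthetics: the paper explicitly states that \SP-completeness of \problem{\#Cubic Monotone $1$-in-$3$ SAT} is not known, and for that reason routes Theorem~\ref{thm-sharp} through \problem{\#2SAT} instead. A correct parsimonious reduction from \problem{$1$-in-$3$ SAT} (whose counting version is \SP-complete) would settle that open point, so you should treat your final paragraph as an unsupported --- and almost certainly incorrect --- claim rather than an observation. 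None of this affects the \NP-completeness statement itself, which needs only a many-one reduction, but you should either drop the parsimony assertion or recognize that it requires a substantially more careful gadget design than the one proposed.
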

We will reduce \problem{Cubic Monotone $1$-in-$3$ SAT} to a tiling problem \sct{\T} for some fixed $\T$.

\subsection{Counting problems}
Throughout the paper we consider natural counting problems corresponding to the decision problems.
For example, instead of asking whether satisfying assignments exist,
we ask \emph{how many} satisfying assignments there are.
Similarly, for tileability, we count the number of tilings.
If in the proof of \NP-completeness,
the corresponding reductions give a bijection between the sets of solutions,
we call such reduction \emph{parsimonious}.

Parsimonious reductions have the additional benefit of proving counting results using the same reduction.
The class \SP{} consists of the counting problems associated with decisions problems in~\NP{}.
A counting problem is \SP-complete if it is in \SP{} and every \SP{} question can be reduced to it.
Thus, if there is a parsimonious reduction from problem~$Q_1$ to~$Q_2$,
then if $Q_1$ is \SP-complete, then so is~$Q_2$.
We refer to~\cite{Val} (see also~\cite{Pap}) for definitions and details on~\SP{} complexity class.

One main goal is to reduce \problem{Cubic Monotone $1$-in-$3$ SAT} to a tiling problem
\sct{\T} for some fixed $\T$.
This reduction will turn out to be parsimonious, hence
the number of satisfying assignments of a given instance of the satisfiability problem
can be calculated by counting the number of tilings of the transformed instance.

However, it is not known whether the associated counting problem
\problem{\#Cubic Monotone $1$-in-$3$ SAT} is \SP-complete.
To get the \SP-completeness result in Theorem~\ref{thm-sharp},
we will modify the reduction to use \problem{2SAT} instead,
whose associated counting problem \problem{\#2SAT} is \SP-complete.

\medskip

\section{Reduction lemmas} \label{sec-reduction}

\subsection{Basic reductions}
In this section we consider five classes of \problem{Tileability} problems.
Let $\cT$ be a collection of tiles and $\cR$ be a collection of regions.
A decision problem in \emph{$(\cT,\cR)$-\problem{Tileability}} consists
of a \emph{fixed} tileset $\T\subset\cT$,
receives some $\G\in\cR$ as input, and outputs whether $\G$ is $\T$-tileable.

We say $(\cT,\cR)$-\problem{Tileability} is \emph{linear time reducible} to $(\cT',\cR')$-\problem{Tileability}
if for any finite tileset $\T\subset\cT$, there exists a finite tileset $\T'\subset\cT'$
and a \emph{reduction map} $f:\cR\to\cR'$ that is computable in linear time (in the complexity of $\G\in\cR$),
such that $\G\in\cR$ is $\T$-tileable if and only if $f(\G)$ is $\T'$-tileable.%
\footnote{Recall that the tiles in the input are given as collections of unit squares.}
If, moreover, that $(\cT',\cR')$-\problem{Tileability} is linear time reducible to
$(\cT,\cR)$-\problem{Tileability}, then they are \emph{linear time equivalent}.
Note that the transformation of the tilesets need not be efficient nor bijective.

For instance, if $\cT$ is the collection of all rectangular tiles and $\cR$ consists of simply connected regions,
then $(\cT,\cR)$-\problem{Tileability} is a class of problems regarding tiling simply connected regions with rectangular tiles.
To simplify the notation, we drop the prefix in $(\cT,\cR)$-\problem{Tileability} when the sets~$\cT$ and~$\cR$ are understood.

\begin{lem}[Tileability Equivalence Lemma]\label{lem-reduction}
The following five classes of \problem{Simply Connected Tileability} problems are linear time equivalent:
\begin{enumerate}
\item \problem{Tileability} with a fixed set of rectangular tiles.
\item \problem{Tileability} with a fixed set of ordinary tiles.
\item \problem{Tileability} with a fixed set of generalized Wang tiles.
\item \problem{Tileability} with a fixed set of ordinary Wang tiles.
\item \problem{Tileability} with a fixed set of relational Wang tiles.
\end{enumerate}
Moreover, the size of the tileset can be preserved in the reductions between
\textup{(ii)} and \textup{(iii)}.
\end{lem}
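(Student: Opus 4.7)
The plan is to prove the equivalences via a short cycle of linear-time reductions, handling the easy directions by direct encoding and concentrating effort on the hard direction, namely the reduction to rectangular tiles. I would aim for each reduction to be bijective on tilings so that all five reductions are parsimonious, which will also be needed later for Theorem~\ref{thm-sharp}.

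The inclusions (i)$\to$(ii), (iv)$\to$(iii), and (iv)$\to$(v) are essentially tautological: rectangles are ordinary tiles, ordinary Wang tiles sit inside generalized Wang tiles, and color matching is the special relational constraint $H(\t,\t')\Leftrightarrow\text{right}(\t)=\text{left}(\t')$ (and analogously for $V$). For the pair (ii)$\leftrightarrow$(iii), in the direction (ii)$\to$(iii) I would assign a single global color to every edge of every tile and to the region boundary, so that any ordinary tiling is automatically Wang-valid and conversely. In the opposite direction I would use the classical bumps-and-notches encoding: scale each unit square to a $k\times k$ block large enough to carve, on each unit edge, a unique complementary micro-profile for each color, so that color matching is replaced by shape matching, with the region boundary modified analogously. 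Both transformations preserve the tileset size, establishing the last sentence of the lemma.

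For (iii)$\to$(iv), decompose each generalized Wang tile into its constituent unit squares and put unique ``private'' colors on every interior edge of the decomposition; any Wang tiling of the transformed region must then reassemble these blocks, yielding a bijection between tilings of the original and transformed regions. For (v)$\to$(iv), I would apply a product construction: for each relational tile $\t\in\W$ and each compatible quadruple of neighbors $(L,R,U,D)$ allowed by $H$ and $V$, introduce an ordinary Wang tile whose four edge colors are the ordered pairs $(L,\t),(\t,R),(U,\t),(\t,D)$. Matching these pair-colors between adjacent cells forces the neighbor labels to agree with the relational compatibility, again bijectively; the expanded tileset is polynomial in $|\W|$ but still finite.

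The main obstacle is the reduction to rectangular tiles, say (iv)$\to$(i), which is the technical heart of the paper. Given a fixed ordinary Wang tileset $\W$, I would construct a finite rectangular tileset $\R$ and a linear-time map $f$ on regions that replaces each Wang cell by a large gadget region tileable from $\R$, with each Wang color encoded geometrically as a specific signature of long and short rectangles crossing the shared boundary of adjacent gadgets. The delicate point is designing $\R$ so that (a) each gadget region admits exactly one $\R$-tiling per compatible Wang tile placement, (b) no spurious $\R$-tilings cross gadget boundaries in unintended ways, and (c) the correspondence remains parsimonious. This is presumably where the bound $|\R|\le 10^6$ from Theorem~\ref{thm-rect} is realized, and will require a careful gadget library rather than any single clever trick.
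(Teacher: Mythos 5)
Your proposal is essentially correct and, for the easy directions, follows the same route as the paper: (i)$\to$(ii)$\to$(iii) by viewing rectangles as ordinary tiles and ordinary tiles as monochromatic generalized Wang tiles, (iii)$\to$(iv) by cutting each generalized Wang tile into unit squares glued back together with private interior colors, and (iii)$\to$(ii) by the bumps-and-notches (zig-zag) encoding, which is exactly how the paper preserves the tileset size between (ii) and (iii). Where you genuinely diverge is in how the cycle is closed. The paper uses the single chain (i)$\Rightarrow$(ii)$\Rightarrow$(iii)$\Rightarrow$(iv)$\Rightarrow$(v)$\Rightarrow$(i), so its one hard reduction starts from \emph{relational} Wang tiles (Lemma~\ref{lem-rect} immediately passes to irreflexive relations and encodes tile identities as shift amounts of perturbed rectangles). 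You instead add a product construction (v)$\to$(iv) (edge colors given by ordered pairs of neighboring tiles) and make your hard reduction (iv)$\to$(i). Your (v)$\to$(iv) construction is correct and bijective on tilings, but it is redundant given that you also claim (iv)$\to$(i); the paper avoids it entirely, and for good reason: its rectangular gadgetry is most naturally phrased in the relational language, not via color signatures on gadget boundaries as you propose. Since the hard direction is deferred in the paper's own proof of this lemma to Lemma~\ref{lem-rect}, your leaving it as a sketch is acceptable here, though your sketch of the mechanism differs from what the paper actually does.

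One concrete omission: in (iv)$\to$(v) you treat the reduction as tautological, but the boundary data changes type. An ordinary Wang instance carries \emph{colors} on the exterior edges of $\G$, whereas a relational instance carries \emph{boundary tiles} (a map from exterior edges to $\W$). To express ``the tile here must have top color $c$'' relationally, you must exhibit a tile with bottom color $c$ to serve as the boundary tile, and if no such tile exists you must add a dummy tile (not participating in any other relations, so it never appears in a tiling). The paper accounts for this with up to $4\chi$ extra tiles, where $\chi$ is the number of boundary colors, and this count feeds directly into the $8(k+4c)^2$ bound of Lemma~\ref{lem-rect} and hence the $10^6$ of the Main Theorem. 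Your version silently assumes every boundary color is realized by some tile on the appropriate side, which need not hold.
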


\begin{proof}
The reductions (i)$\implies$(ii)$\Leftrightarrow$(iii)$\implies$(iv)$\implies$(v)
are elementary and given below. The reduction (v)$\implies$(i) is stated separately
as Lemma~\ref{lem-rect} and proved in the next section.

We may consider a rectangular tile as an ordinary tile,
which in turn is a monochromatic generalized Wang tile.
Therefore the reductions (i)$\implies$(ii)$\implies$(iii) are immediate,
where each reduction map is simply the identity.

(iii)$\implies$(iv).
Given a set of generalized Wang tiles, color each interior edge with a new color not used anywhere else,
and consider each square as a separate ordinary Wang tile (see Figure~\ref{iii-iv}).
These tiles are forced to reassemble themselves as the original generalized Wang tiles.
The reduction map is again the identity.

(iv)$\implies$(v).
It is obvious how to define the Wang relations to mimic the
colored Wang tiles without increasing the number of tiles.
To encode the boundary conditions, we may need to introduce
less than $4\chi$ tiles, where $\chi$ is the number of colors permitted
on the boundary.  Indeed, to specify a color $c$ on the top boundary,
we need to choose an (arbitrary) tile whose bottom color is~$c$.
If no such tile exists, we must add a new tile to do so.
If we do not involve the new tile in any Wang relations in the
other directions, then it will never be used in the actual tiling,
and thus will not affect tileability.
We do the same for the other three directions.

The final reduction (v)$\implies$(i) is more difficult and is the
content of Lemma~\ref{lem-rect} and proved in a later section.

To preserve the number of tiles in (iii)$\implies$(ii),
scale the generalized Wang tile and replace each colored edge by an
appropriate rectilinear zig-zag curve to encode the matching rules
(see Figure~\ref{iii-ii} and \cite{Gol2}).
\end{proof}

\begin{figure}[hbtp]
   \subfloat{\includegraphics[scale=0.4]{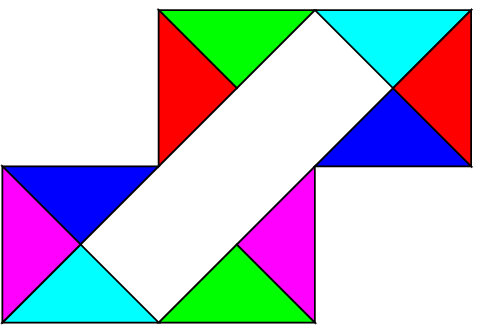}}
   \quad
   \subfloat{\includegraphics[scale=0.4]{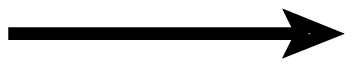}}
   \quad
   \subfloat{\includegraphics[scale=0.4]{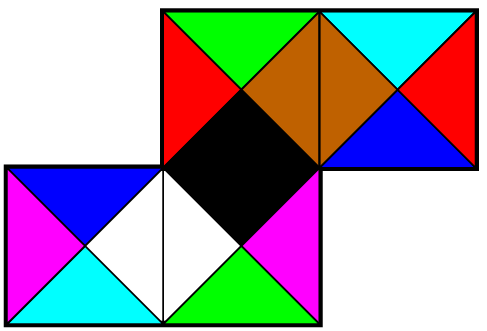}}
   \quad
   \subfloat{\includegraphics[scale=0.4]{arrow-small}}
   \quad
   \subfloat{\includegraphics[scale=0.4]{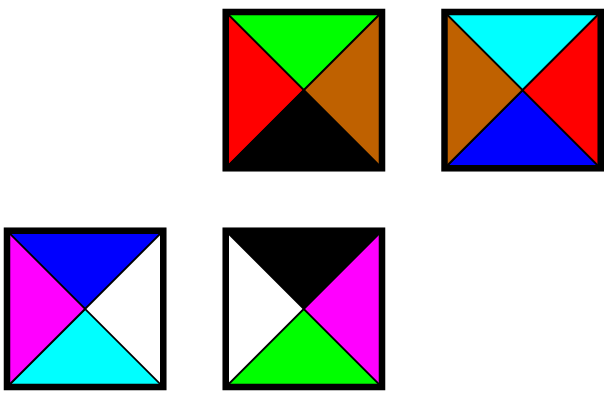}}
   \caption{From generalized Wang tiles to ordinary ones.}
   \label{iii-iv}
\end{figure}

\begin{figure}[hbtp]
   \subfloat{\includegraphics[scale=0.6]{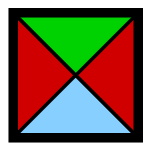}}
   \qquad
   \subfloat{\includegraphics[scale=0.4]{arrow-small}}
   \qquad
   \subfloat{\includegraphics[scale=0.6]{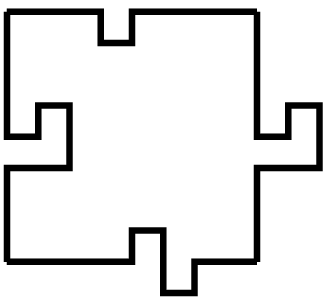}}
   \caption{Replacing each colored edge by a zig-zag curve to get ordinary tiles.}
   \label{iii-ii}
\end{figure}

\medskip

\subsection{Two main reductions}

\begin{lem}[First Reduction Lemma]\label{lem-wang}
There exists a set $\T$ of at most $23$ generalized Wang tiles with total area $133$ and using $9$ colors such that \sct{\T} is \NP-complete.
Moreover, this will be achieved by a parsimonious reduction from \problem{Cubic Monotone $1$-in-$3$ SAT}.
\end{lem}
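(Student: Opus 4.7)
The plan is to encode a given instance $E$ of \problem{Cubic Monotone $1$-in-$3$ SAT} as a simply connected region $\G_E$ tileable by a fixed set of generalized Wang tiles if and only if $E$ is satisfiable. Because each variable occurs three times and each clause has three variables, $n=m$, and the combinatorial data consist of a tripartite-style incidence between $n$ variables and $n$ clauses. I would design five kinds of gadgets out of generalized Wang tiles: a \emph{wire} that carries one bit of information along a strip; a \emph{turn} (and a \emph{crossing}, since I am not assuming planarity) used to route wires; a \emph{variable gadget} (splitter) that has two legal tilings, one for $x_i=0$ and one for $x_i=1$, and forces the same value onto its three output wires; a \emph{clause gadget} that admits a tiling iff exactly one of its three input wires carries the value $1$, uniquely determined by that input; and a \emph{filler} tile that covers the complementary background.

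For the wires, a bit is represented by which of two colors appears on the transverse edges; a constant number of tiles per color suffice for straight propagation, turns, and a standard four-tile crossing. The variable gadget is the only source of non-determinism in the tiling: I would arrange a small shape whose two tilings literally correspond to the two Boolean assignments, and which forces its three outgoing wires to all carry that common value. The clause gadget is the core verification device: a small rectilinear region with three incoming wires where local Wang constraints rule out the configurations $000$, $110$, $101$, $011$, $111$ and leave exactly one legal completion for each of $100$, $010$, $001$. Each gadget is built on a small color palette, which I would share across gadgets to stay within the budget of $9$ colors and $23$ tiles summing to area $133$.

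The final step is the global layout. I would place the $n$ variable gadgets along one side of a large rectangular hull and the $n$ clause gadgets along an adjacent side, routing every wire monotonically through the interior with turns and crossings, and filling every remaining square with the filler tile so that no holes appear. This keeps $\G_E$ simply connected while producing a region of size polynomial (in fact linear) in $|E|$, so the reduction runs in polynomial time. Parsimoniousness follows because the only local choices in any tiling occur inside the variable gadgets: every wire, turn, crossing, clause, and filler square is uniquely tiled once the Boolean values on its incident wires are set, giving a bijection between tilings of $\G_E$ and $1$-in-$3$ satisfying assignments of $E$. \NP-completeness then comes from Theorem~\ref{thm-mr}, and since tileability is trivially in \NP, we conclude \sct{\T} is \NP-complete.

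The main obstacle is reconciling \emph{simple connectedness} with \emph{parsimoniousness}. If one allowed holes, standard wire-gadget-wire layouts would suffice and the filler would be unnecessary; forbidding holes forces a filler that must mesh consistently with every gadget boundary, every wire side, every turn and crossing, and every corner of the hull, while still having a uniquely determined placement everywhere. Designing a filler whose tiles have no spurious degrees of freedom adjacent to the many different local contexts---without blowing up the tile count, color count, or total area past the stated bounds---is the delicate part. My intended resolution is to commit to a single ``background'' color on all gadget boundaries that face filler, so that the filler reduces to a small, locally rigid vocabulary that can be verified case by case at corners and along wire sides, converting the potentially global topological constraint into finitely many local checks.
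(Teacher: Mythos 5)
Your high-level architecture (variable gadgets as the sole source of non-determinism, wires carrying a bit in their edge colors, crossings, clause gadgets checking the $1$-in-$3$ condition, fillers, parsimony via a unique skeleton) matches the paper's, but there is a genuine gap at the step you compress into ``routing every wire monotonically through the interior with turns and crossings.'' The tileset is \emph{fixed} and the only input is the region and its boundary colors, so the locations of every turn and every crossing must be \emph{forced} by that data alone. If turn and crossing tiles exist in the tileset and the interior is a uniform background (your proposed ``single background color on all gadget boundaries that face filler'' makes the interior maximally uniform), then a wire joining a variable on one side to a clause on an adjacent side can bend at many different points and cross its neighbors at many different places, each choice yielding a distinct valid tiling. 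At best this destroys parsimony (many tilings per satisfying assignment); at worst it permits unintended routings in which the wrong wires meet the wrong clause gadgets, breaking correctness. Your closing paragraph correctly identifies filler rigidity as delicate, but the real obstruction is not local rigidity of the filler next to gadgets --- it is the global problem of pinning down the wiring diagram from boundary information, and a uniform background color works against you there.

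The paper resolves exactly this point with two decisions you do not make. First, it eliminates turns entirely: variables sit on the left edge, clauses on the right edge, every wire is a horizontal row, and the variable-to-clause incidence is realized as a permutation $\s\in S_{3n}$ written as a product of adjacent transpositions, so the only routing primitive is a crossover of two \emph{adjacent} rows. Second, the column at which each crossover occurs and the depth (row pair) at which it acts are dictated by a color sequence $c_k=01(02)^{k-1}63$ written on the \emph{top boundary}; a cascade of control tiles ($L$, $W$, $S$, $R$, then $X$, $B$, $K$, then filler $F$) propagates each instruction downward so that the crossover tile is forced to appear at exactly the prescribed position. Sublemma~\ref{sl:unique-wang} --- that the base region admits a \emph{unique} $\T$-tiling --- is the technical heart of the proof, and the parity recoloring of $\T$ into $\T'$ then introduces exactly one degree of freedom per variable. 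Without an analogue of this forcing mechanism, your reduction as described does not establish either correctness or parsimoniousness, so you would need to either adopt the boundary-instruction/control-tile scheme or supply a different device that uniquely determines the wiring skeleton.
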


\begin{lem}[Second Reduction Lemma]\label{lem-rect}
For a set $\W$ of at most $k$ (ordinary) Wang tiles with~$c$
(boundary) colors, there exists a set $\R$ of at most $8(k+4c)^2$
rectangular tiles with the following property.
Given a simply connected colored region $\G$, there is
a simply connected region $\G'$ such that $\G$ is $\W$-tileable
if and only if $\G'$ is $\R$-tileable.
Moreover, this reduction is parsimonious and can be computed in linear time.
\end{lem}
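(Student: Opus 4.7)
The plan is a scaling construction. Given a set $\W$ of at most $k$ ordinary Wang tiles with at most $c$ boundary colors, set $L := k+4c$ and replace each unit square of the input colored region $\G$ by an $L\times L$ \emph{macro-block} inside $\G'$. To each Wang tile $\t = (N,E,S,W)\in\W$ I would assign a specific rectangular tiling of an $L\times L$ macro-block; color matching between adjacent Wang tiles will be enforced by small ``crossing'' rectangles that straddle the boundary between two neighboring macro-blocks, with the position of the crossing rectangle along the shared edge encoding the common color.

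Concretely, fix a bijection between the $c$ colors and offsets $\{1,\ldots,c\}$, writing $\ell_\ga$ for the offset of color $\ga$. For each pair of adjacent macro-blocks I designate a distinguished $2\times 1$ (or $1\times 2$) ``crossing rectangle'' at offset $\ell_\ga$ on the shared edge that straddles the boundary between them, so that \emph{both} sides must leave exactly one empty unit cell at offset $\ell_\ga$ for this rectangle to be placed. Inside each macro-block corresponding to $\t = (N,E,S,W)$, the tiling is designed so that along each of the four boundary edges the strip of $L$ cells is filled by filler rectangles except for a single unit hole at offset $\ell_N$ on top (and $\ell_E, \ell_S, \ell_W$ on the other three sides), precisely matching the crossing rectangles of colors $N,E,S,W$. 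To encode the boundary colors of $\G$, I would append a thin frame around $\G'$ containing pre-placed half-crossing rectangles at the offsets specified by the input, forcing the adjacent macro-blocks to display the correct colors on $\pt\G$.

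Counting the rectangle types: at most $c$ crossing rectangles, at most $4c$ boundary frame rectangles, and for each Wang tile a fixed collection of interior filler rectangles ranging over positions and sizes inside an $L\times L$ grid; a careful enumeration (one pattern per Wang tile, with at most $O(L^2)$ distinct pieces needed across all blocks, and a factor of at most 8 to account for orientations and the four edge strips) yields the bound of $8(k+4c)^2$. The map $\G\mapsto\G'$ is computable in linear time, since it simply scales $\G$ by $L$ and attaches a constant-width frame. Parsimony follows because the assignment $\t \mapsto (\text{macro-block tiling of } \t)$ is one-to-one: every $\W$-tiling of $\G$ yields exactly one $\R$-tiling of $\G'$ by substitution, and conversely every $\R$-tiling of $\G'$ determines a unique $\W$-tiling of $\G$ by reading off which macro-block pattern appears in each scaled cell.

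The main obstacle is proving \emph{rigidity}: one must show that every $\R$-tiling of $\G'$ honors the macro-block grid, in the sense that no rectangle crosses a macro-block boundary except via a designated crossing rectangle, and that within each macro-block the intended Wang-tile pattern is the \emph{only} valid tiling with holes at the four prescribed offsets. The standard way to achieve this is to include, among the fillers, long thin ``anchor'' rectangles of mutually incompatible lengths that force alignment with the macro-block grid (via an area or pigeonhole argument along rows and columns), together with interior fillers so specialized that the hole positions along the edges uniquely determine the interior tiling. Executing this rigidity proof while keeping the rectangle count within $8(k+4c)^2$ is the delicate combinatorial heart of the lemma, and is where the bulk of the work lies.
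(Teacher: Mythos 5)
Your macro-block substitution scheme is a natural first idea, but it has a genuine gap exactly where you locate it: the rigidity claim is asserted, not proved, and the specific mechanisms you propose have concrete failure modes. Rectangular tiles carry no markings, so nothing forces your $2\times 1$ ``crossing rectangle'' to straddle a macro-block boundary rather than sit entirely inside one block as an ordinary domino; nothing prevents the filler rectangles that leave a unit hole at offset $\ell_\ga$ from being permuted along the strip to put the hole at a different offset (if lengths $p$ and $q$ with $p+q=L-1$ are both available, the hole can sit at position $p+1$ or $q+1$); and once you pool the interior fillers of all $k$ Wang tiles into one tileset, you must rule out chimeric tilings that mix pieces from different intended patterns inside a single block. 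The appeal to ``anchor rectangles of mutually incompatible lengths'' is a restatement of the problem, not a solution --- forcing unmarked rectangles to respect a grid is precisely the hard combinatorial content of the lemma. Separately, your treatment of the boundary colors by ``pre-placed half-crossing rectangles'' is not legitimate: the instance of a tileability problem is a bare region, so the boundary colors must be encoded in the geometry of $\G'$ itself, not by partially committing to a tiling.

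For contrast, the paper's proof avoids all of these issues by a different route. It first passes to \emph{relational} Wang tiles with irreflexive relations, then exhibits a five-rectangle tileset $\R_0=\{f,w,s,h,v\}$ and a region $\G_0(r,c)$ with protrusions and cavities on its boundary whose $\R_0$-tiling is provably \emph{unique} (Sublemma~\ref{sl:unique-rect}, a step-by-step forcing argument along anti-diagonals); rigidity is thus established once and for all before any Wang-tile information enters. The Wang structure is then injected by an $(M,\e)$-expansion: each $w$ tile may shift right and down by $5^i$ to represent $\t_i$, the $s$ tiles are perturbed by $(\pm 5^i,\pm 5^i)$ to permit exactly those shifts, and the $h$, $v$ tiles are perturbed by $5^j-5^i$ only for pairs in the Wang relation. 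Unambiguous decoding rests on the base-$5$ facts that $5^i-5^j=5^k+5^\l$ has no solutions and $5^i-5^j=5^k-5^\l$ has only trivial ones, and the boundary tiles are encoded by perturbing the protrusions and cavities of the region. This is the machinery your sketch would need to replace, and your proposal does not supply a substitute for it.
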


We may transform the set of $23$ generalized Wang tiles afforded by Lemma~\ref{lem-wang},
according to the procedure outlined in (iii)$\implies$(ii) of Lemma~\ref{lem-reduction},
in order to obtain Theorem~\ref{thm-ordinary} using $23$ ordinary tiles.
Similarly, using the transformation of Lemma~\ref{lem-rect}, we conclude
the result for rectangular tiles in Theorem~\ref{thm-rect} (see Subsection~\ref{ss:proof-thm-rect}).
Theorem~\ref{thm-sharp} can be shown by modifying the proof of Lemma~\ref{lem-wang}
to achieve a parsimonious reduction from, say, \problem{2SAT},
whose associated counting problem is \SP-complete (see Subsection~\ref{ss:proof-thm-sharp}).

\medskip

\section{Proof of the First Reduction Lemma {\small(Lemma~\ref{lem-wang})}} \label{s:first-reduction}

\subsection{General setup}
The goal of this section is to construct a set of generalized
Wang tiles that could be used to solve \problem{Cubic Monotone $1$-in-$3$ SAT}.
Each expression will be encoded as a colored rectangular boundary.
Tiles corresponding to variables and clauses will appear on the left
and right sides of the region, respectively.  The variable tiles will
``transmit'' its state ($0$ or~$1$) through ``wires'' to the clause tiles;
each clause tile will ``check'' if precisely one out of three signals it
receives is~$1$.  The path of the transmissions will be regulated by placing
``crossover tiles'' that allow signals to crossover at specific locations.
The positioning of such tiles will be enforced by using a combination of
``control tiles'' that follow instructions encoded on the boundary.
Empty spaces will be filled by ``filler tiles.''

\subsection{Tileset $\T$}
Let $\T$ be a tileset with the $7$ small tiles shown in Figure~\ref{tiles-0}
and the $3$ big tiles in Figure~\ref{tiles-1}.  Some horizontal edges are
colored by their labels; all unlabeled edges are colored by~$0$, which is omitted
in the figures for clarity, but acts as any other ordinary color.
\begin{figure}[hbtp]
   \centering
   \subfloat[$L$]{\label{tile-l}\includegraphics[scale=0.4]{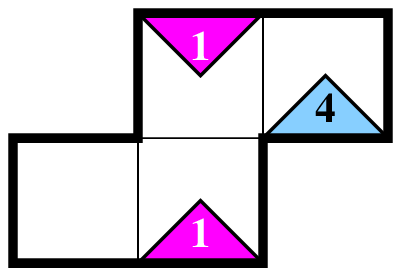}}
   \quad
   \subfloat[$W$]{\label{tile-w}\includegraphics[scale=0.4]{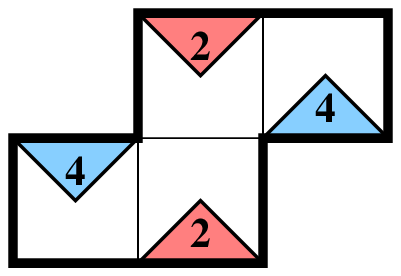}}
   \quad
   \subfloat[$S$]{\label{tile-s}\includegraphics[scale=0.4]{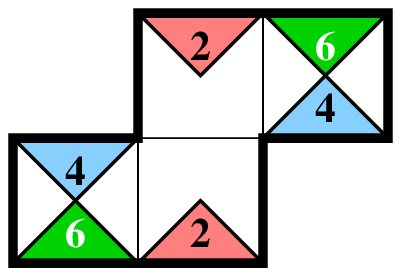}}
   \quad
   \subfloat[$R$]{\label{tile-r}\includegraphics[scale=0.4]{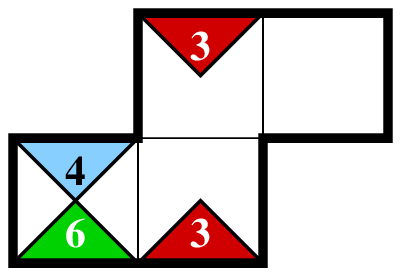}}
   \\\medskip
   \subfloat[$B$]{\label{tile-b}\includegraphics[scale=0.4]{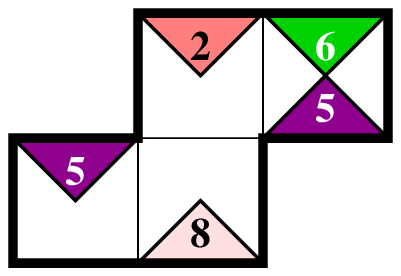}}
   \qquad
   \subfloat[$K$]{\label{tile-k}\includegraphics[scale=0.4]{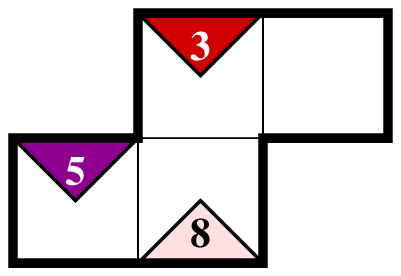}}
   \qquad
   \subfloat[$F$]{\label{tile-f}\includegraphics[scale=0.4]{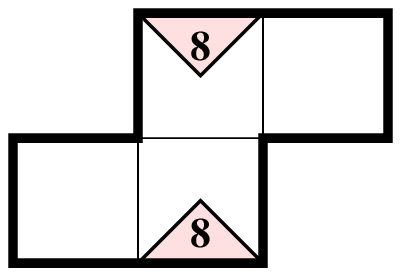}}
   \caption{Tiles in tileset $\T$.}
   \label{tiles-0}
\end{figure}

\begin{figure}[htbp]
   \centering
   \subfloat[$X$]{\label{tile-x}\includegraphics[scale=0.4]{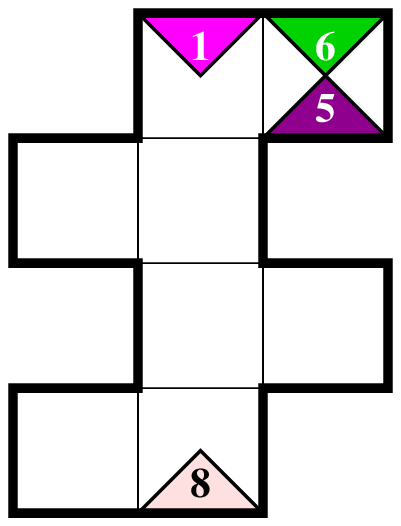}}
   \qquad
   \subfloat[$V$]{\label{tile-v}\includegraphics[scale=0.4]{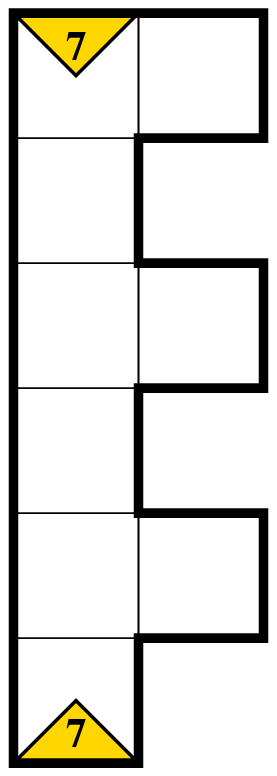}}
   \qquad
   \subfloat[$C$]{\label{tile-c}\includegraphics[scale=0.4]{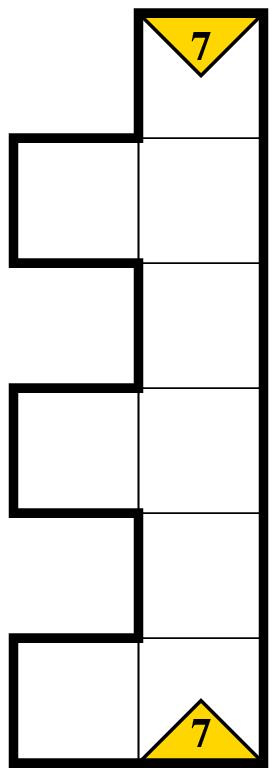}}
   \caption{More tiles in $\T$.}
   \label{tiles-1}
\end{figure}

\subsection{Tileset $\T'$}
Recall that the vertical edges of our tiles in $\T$ are all colored with~$0$.
Form $\T'$ by recoloring the vertical edges of tiles in $\T$ as follows.
Given each small tile $\t\in\T$ in Figure~\ref{tiles-0}, we introduce
a variant by coloring all its vertical edges with~$1$.
The color of the vertical edges is called the \emph{parity} of~$\t$.
Include both this variant and the original in~$\T'$.

Given a rectangular array of these tiles, the parities are consistent
across each row and are independent across the columns.  Intuitively,
these tiles act as wires that can transmit data (parity of the tile)
horizontally across the region.

We continue defining $\T'$.
We add three new versions of the crossover tile~$X$ as in Figure~\ref{tile-xd}.
Intuitively, this allows the data transmissions to \emph{crossover}.
We also add a variant of the variable tile~$V$, as in Figure~\ref{tile-vd},
where all the right vertical edges are colored with~$1$.
The parity of the variable tile corresponds to the truth value assigned to that variable.
Finally, we \emph{replace} the clause tile~$C$ by the three shown in Figure~\ref{tile-cd},
where each tile has one out of three pairs of left vertical edges colored with~$1$.
Thus $\T'$ consists of $23$ tiles.
\begin{figure}[hbtp]
   \centering
   \subfloat[$X$]{\label{tile-xd}\includegraphics[scale=0.4]{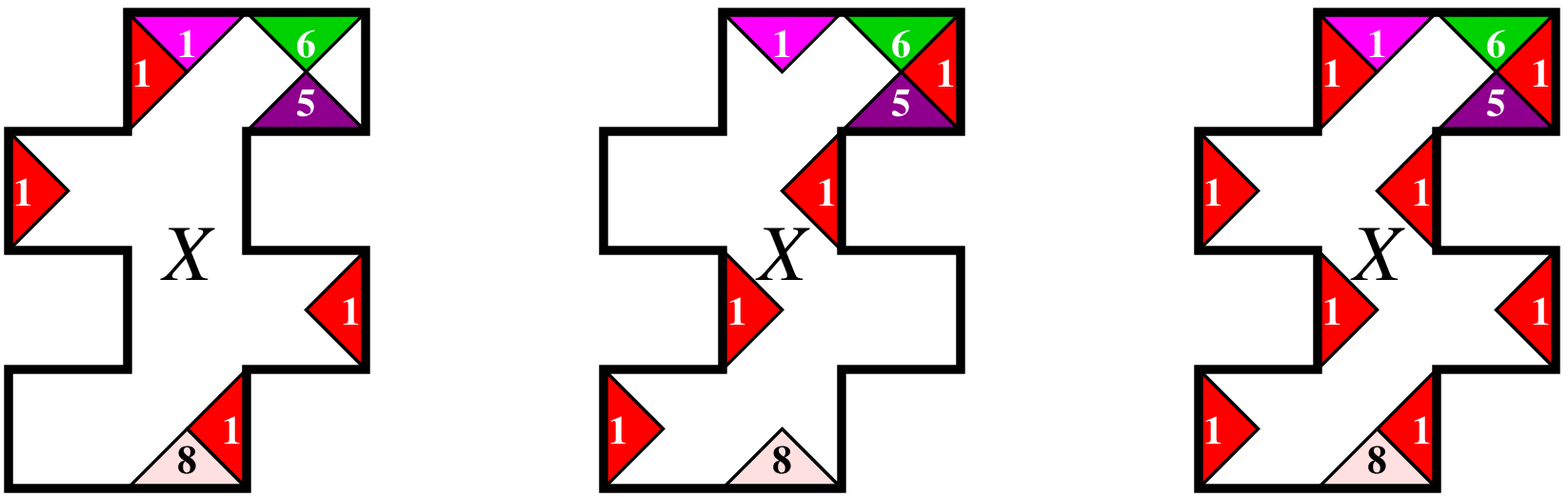}}
   \\
   \subfloat[$V$]{\label{tile-vd}\includegraphics[scale=0.4]{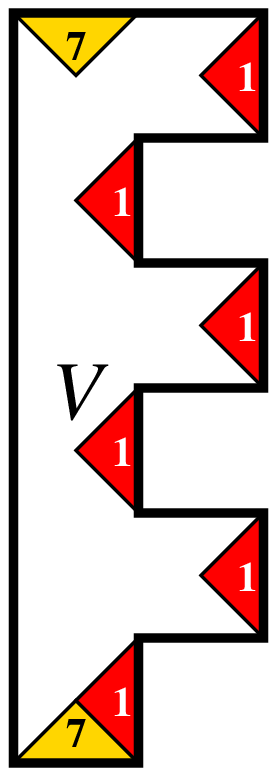}}
   \qquad\qquad
   \subfloat[$C$]{\label{tile-cd}\includegraphics[scale=0.4]{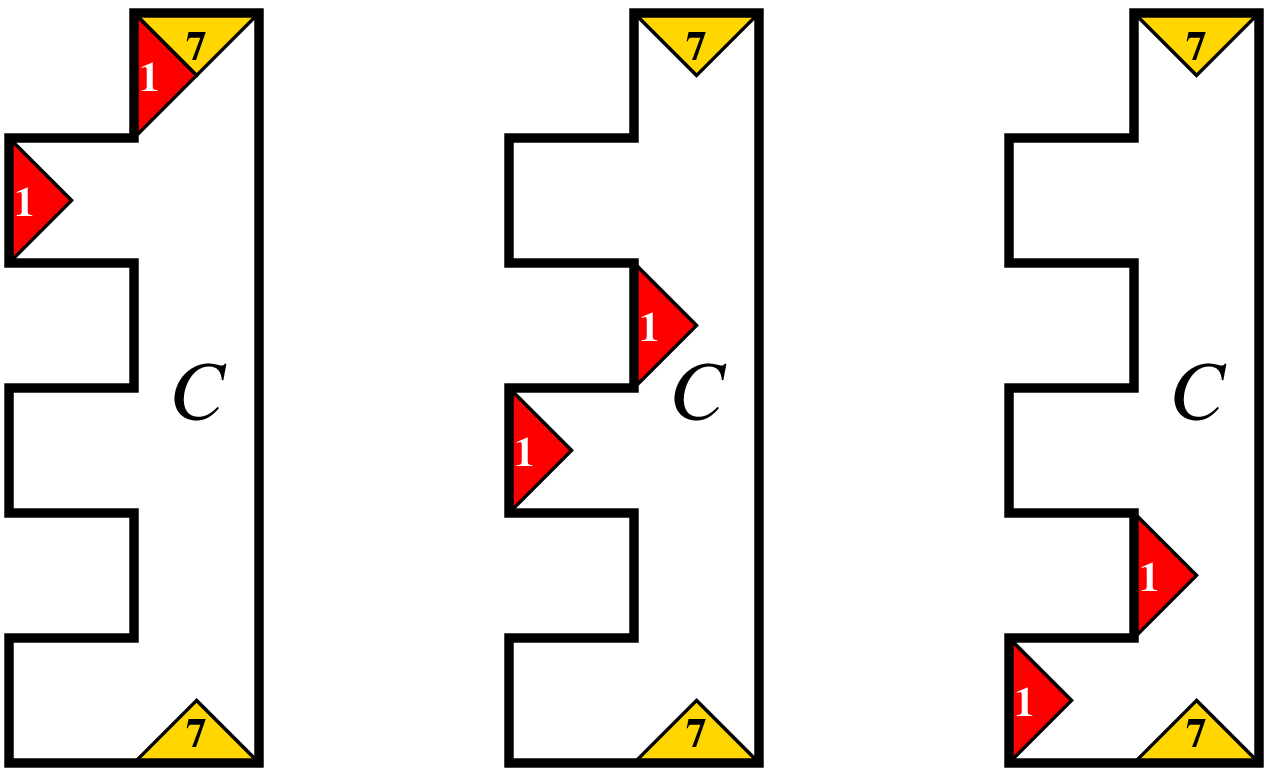}}
   \caption{Variations of tiles in $\T$.}
   \label{tile-2f}
\end{figure}

We will place the variable tiles on the left and the clause tiles on the right.
It remains to send the data from the variables to the correct clauses.
We achieve this by specifying boundary colors to force crossover tiles
to appear at the desired locations.

\subsection{Reduction construction}
Our goal is to embed the decision problem \problem{Cubic Monotone $1$-in-$3$ SAT}
as a tiling problem.  Given a cubic monotone $1$-in-$3$ SAT expression~$E$
with variables $X=\{x_1,\ldots,x_n\}$ and clauses $\cC=\{C_1,\ldots,C_n\}$,
consider it as a permutation $\s=\s_E\in S_{3n}$ in the symmetric group on
$3\ts n$ letters as follows.  Think of $\s$ as a bijection from the ordered
multiset $X'=\{x_1,x_1,x_1,x_2,\ldots,x_n\}$ to the ordered multiset
$\cC'=\{C_1,C_1,C_1,C_2,\ldots,C_n\}$, where each variable and each
clause is listed three times.  For each $x_i\in C_j$, we have $\s(x_i)=C_j$ once.
Now identify each multiset with $[3n]=\{1,2,\ldots,3n\}$ to get $\s$ as a permutation
in~$S_{3n}$.  Let $s_i=(i,i+1)$ be an adjacent transposition for $i\in[3n-1]$.
Write $\s=s_{i_1}s_{i_2}\ldots s_{i_d}$ as a product of adjacent transpositions,
with $d=O(n^2)$.\footnote{For illustration purposes, it is often convenient
to encode the product of adjacent transpositions using \emph{wiring diagrams},
as shown in Figures~\ref{wiring-1} and~\ref{wiring-2}.}

Let $c_k$ be the color sequence $01(02)^{k-1}63$.
Define a rectangular region $\G=\G_E$ as follows.
The height of $\G$ is $6n$ and the vertical edges are colored with~$0$.
The width is the length of the color sequence $7c_{i_1}c_{i_2}\ldots c_{i_d}07$,
which is used as the top boundary.  The bottom boundary is $7(08)^t07$ with the same length as the top boundary.
The following result demonstrates the ability to place the crossover tile~$X$ at
arbitrary depth of a large rectangular region.

\begin{sublem}\label{sl:unique-wang}
The region $\G$ admits a unique $\T$-tiling.
\end{sublem}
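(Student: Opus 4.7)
The plan is to argue uniqueness by a forced-placement analysis starting from the boundary and propagating inward. Since every unlabeled edge of every tile in $\T$ carries color~$0$, the only way a tile can meet a boundary edge of nonzero color $c\in\{1,2,3,6,7,8\}$ is through one of the handful of edges in the tileset that explicitly bear color~$c$. First I would compile, for each such color, the short list of (tile, edge) pairs in $\T$ where that color appears, and then match these lists against the prescribed top boundary $7\,c_{i_1}\cdots c_{i_d}\,0\,7$, the bottom boundary $7\,(08)^t\,0\,7$, and the all-$0$ left and right boundaries. The plan is to verify that at each nonzero boundary color the candidate tile is uniquely determined, so that the top row, bottom row, and leftmost and rightmost columns are completely forced.

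Having fixed the top row, I would then show by induction on the row index $r$ that row~$r$ is uniquely determined. The inductive step uses the bottom edges of row $r-1$ as the top-boundary condition for row~$r$ and repeats the same matching argument. The key structural observation that makes this induction go through is that $\T$ is designed so that, under the color scheme $c_k=01(02)^{k-1}63$, the forced placements in rows $1,2,\ldots,k$ produce a ``descending staircase'' of small tiles whose bottom edges in row $k$ match exactly the top edges of the crossover tile $X$; below row~$k$, the $0$-colored edges leave only $0$-compatible filler tiles such as $F$, $B$, $K$, which are themselves pinned down by their already-forced horizontal neighbours and by the bottom boundary.

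The main obstacle is the parametric case analysis inside a single $c_k$ block: one must verify that, for every value of $k$, the length-$(2k+2)$ sequence $01(02)^{k-1}63$ really does force a unique descent to depth exactly $k$ without branching, and that the configuration below the crossover closes up correctly against the $0$s (and $8$s) on the bottom of that block. I would organize this as an auxiliary table or a short subclaim that, for each of the top $k$ rows of such a block, lists which of the seven small tiles $L,W,S,R,B,K,F$ can sit in each cell given the top edge above it and the left neighbour to its side, and checks that in every case only a single tile is compatible with all four constraints.

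Finally, since adjacent blocks $c_{i_j}$ and $c_{i_{j+1}}$ are separated in the top boundary by a $0$-colored edge and all intermediate rows only ever expose $0$-colored edges sideways, I would glue the block-by-block argument together by observing that the columns between blocks must be filled entirely with $0$-on-top, $0$-on-bottom tiles that are themselves forced by the already-determined left and right neighbours. Putting this all together, every cell of $\G$ admits exactly one consistent choice of tile, which yields the unique $\T$-tiling claimed in the sublemma.
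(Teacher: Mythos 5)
Your proposal follows essentially the same route as the paper's proof: a forced-placement argument propagating row by row from the colored boundary, with the key invariant being the ``descending staircase'' under each block $c_k$ that bottoms out at depth $k$ where the crossover tile $X$ is forced, followed by filler tiles down to the bottom boundary. The paper makes your staircase explicit as the row pattern $LW^kS^\l R$ with bottom color sequence $01(02)^k(62)^\l 63$, forced to be followed by $LW^{k-1}S^{\l+1}R$ (and by $XB^\l K$ when $k=0$), which is precisely the auxiliary subclaim your plan calls for.
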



\begin{proof}
The left and right sides are forced to be filled with variable and clause tiles,
respectively. Now consider the section in between.

For $k\geq1$ and $\l\geq0$,
consider a row of tiles $LW^kS^\l R$ (meaning an $L$ tile followed by a $W$ tiles
$k$ times, an $S$ tile $\l$ times, and ending with an $R$ tile).
The bottom color sequence is $01(02)^k(62)^\l63$.
One easily checks that the unique way to tile the next row is with $LW^{k-1}S^{\l+1}R$.

If $k=0$, we get the case where we have a row $LS^\l R$ with bottom color sequence $01(62)^\l63$.
The unique way to tile the next row is with $XB^\l K$.

The section below will be filled by filler tiles~$F$.
Thus every section below $c_i$ is filled uniquely, with the crossover
tile~$X$ occupying rows $i$ and $i+1$ in the first column.
\end{proof}

The above proof is illustrated with two examples in the next subsection.

\begin{cor}
The expression $E$ is satisfiable if and only if $\G_E$ is $\T'$-tileable.
Moreover, the reduction is parsimonious, that is, the number of tilings
of $\G_E$ is the number of satisfying assignments for $E$.
\end{cor}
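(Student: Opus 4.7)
The plan is to deduce the corollary from Sublemma~\ref{sl:unique-wang} by separating the \emph{layout} of a tiling from the \emph{parities} assigned to its vertical edges. Every tile in $\T'$ is either a tile of $\T$ or a variant obtained by recoloring vertical edges: the $1$-parity versions of the seven small tiles, the three additional crossover variants in Figure~\ref{tile-xd}, the second $V$-variant in Figure~\ref{tile-vd}, and the three replacements of $C$ in Figure~\ref{tile-cd}. In particular, the horizontal edge colors of each tile in $\T'$ match those of a tile in $\T$. Since the uniqueness argument in Sublemma~\ref{sl:unique-wang} propagates exclusively through horizontal color matches against the prescribed top boundary $7c_{i_1}\cdots c_{i_d}07$ and the bottom boundary $7(08)^t07$, it applies verbatim to any $\T'$-tiling of $\G_E$: the underlying layout is forced to agree with the unique $\T$-tiling, and in particular the position of each crossover tile $X$ is pinned to the row dictated by the corresponding transposition $s_{i_k}$.

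With the layout fixed, the only remaining freedom is the choice of parity in $\{0,1\}$ on each vertical edge. I would next check that parities are constant along each horizontal \emph{wire}: by construction the small tiles $L,W,S,R,B,K,F$ in $\T'$ each have left and right vertical edges of the same color, so a single parity propagates through each row between the variable side and the clause side. At a crossover tile, the four variants of $X$ (the original plus the three from Figure~\ref{tile-xd}) realize all four combinations of incoming left parities and swap them with the corresponding outgoing parities, faithfully implementing the adjacent transposition $s_{i_k}$ on the parity data carried by the two wires passing through. Composing these transpositions in order reproduces $\sigma=\sigma_E$, so the wire leaving variable $x_i$ with parity equal to its assigned truth value (as selected by the two $V$-variants) arrives at clause $C_j$ precisely when $x_i\in C_j$. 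Finally, the three $C$-variants in Figure~\ref{tile-cd} admit a tiling against an incoming triple of parities if and only if exactly one of the three is $1$, which is precisely the $1$-in-$3$ satisfaction condition.

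Putting the pieces together, every satisfying assignment of $E$ determines a unique $\T'$-tiling of $\G_E$ (layout forced by the Sublemma; variable parities set by the assignment; wire parities propagated through the crossovers; clause variant selected by the unique $1$-input), and conversely every $\T'$-tiling of $\G_E$ yields a satisfying assignment via the parities of its variable tiles. This gives both the biconditional and the equality of counts, proving the reduction is parsimonious. The main obstacle I anticipate is the careful bookkeeping at the crossover tiles — verifying that the four $X$-variants really do realize all four parity combinations in a way compatible with the surrounding wires, and that the prescribed color sequence $c_k$ forces $X$ into exactly the intended row — but once that inspection is done, the rest follows from Sublemma~\ref{sl:unique-wang} and the straightforward horizontal propagation of parity through the small tiles.
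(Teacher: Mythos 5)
Your proposal is correct and follows exactly the intended route: the paper itself dismisses the corollary as immediate from the construction, and the argument it has in mind is precisely yours — project a $\T'$-tiling to a $\T$-tiling by forgetting vertical-edge colors, invoke Sublemma~\ref{sl:unique-wang} to pin the layout, and observe that the remaining freedom (row parities set by the $V$-variants, permuted by the $X$-variants realizing $\sigma_E$, and filtered by the three $C$-variants) is in bijection with satisfying assignments. Your elaboration, including the flagged need to inspect the four $X$-variants against the figures, is a faithful and complete filling-in of that one-line proof.
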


The corollary follows immediately from the construction given above,
and concludes the proof of Lemma~\ref{lem-wang}.

\subsection{Examples of the tiling construction}
In Figure~\ref{diagram-1} we show how to place a crossover tile in a special case,
corresponding to expression $\{(x,y,x),(x,y,y)\}$.
We illustrate the crossings with a \emph{wiring diagram} and then give a complete Wang tiling.
In Figure~\ref{diagram-2} below we give a bigger example
of the wiring diagram and the unique Wang tiling, corresponding to expression
$\{(x,y,x),(x,y,z),(y,z,z)\}$.
\begin{figure}[hbtp]
   \subfloat[Wiring diagram]{\label{wiring-1}\includegraphics[scale=0.3]{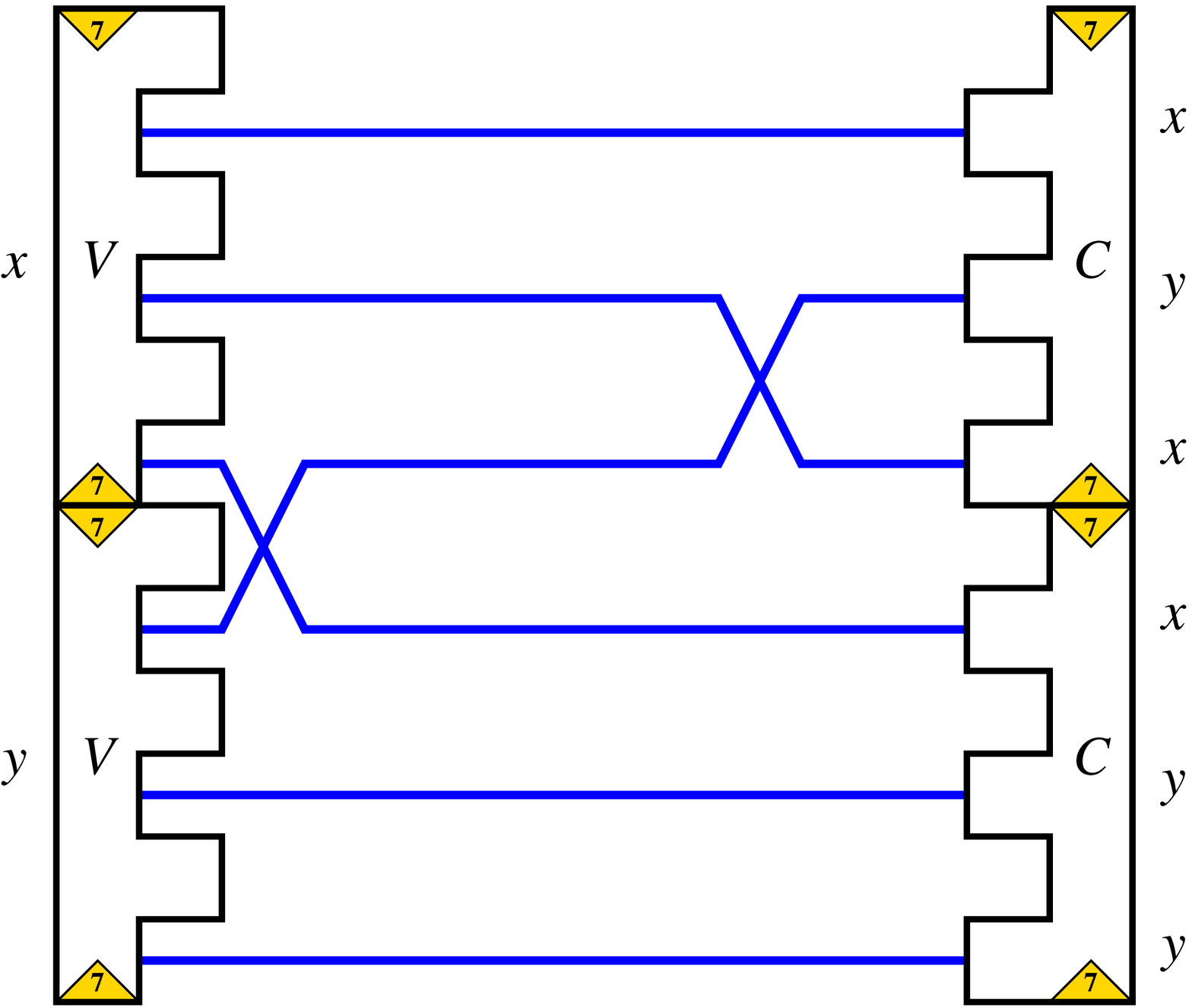}}
   \qquad
   \subfloat[Unique tiling]{\label{encode-1}\includegraphics[scale=0.3]{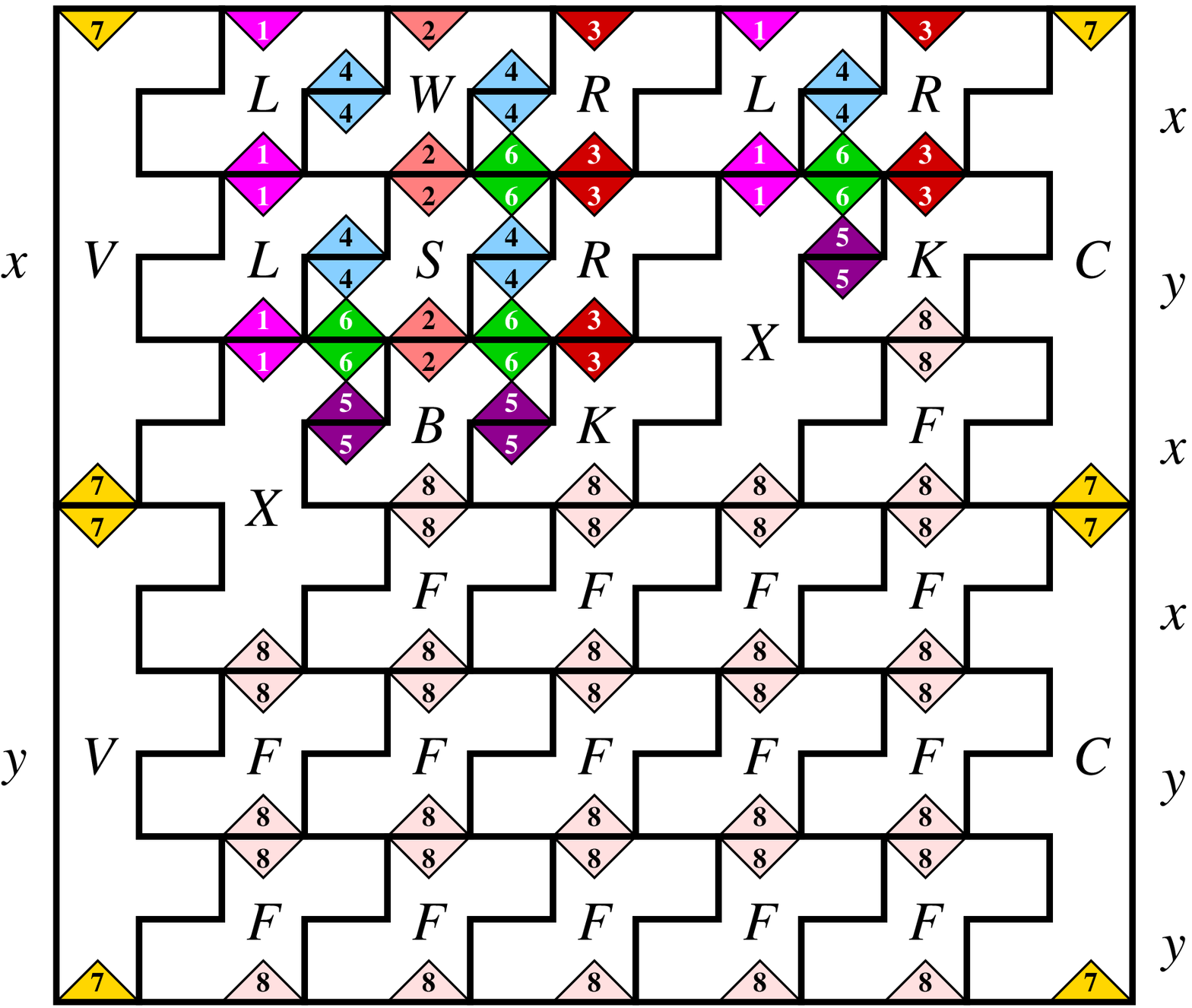}}
   \caption{A small example of how to place crossover tiles.}
   \label{diagram-1}
\end{figure}

\begin{figure}[hbtp]
   \subfloat[Wiring diagram]{\label{wiring-2}\includegraphics[scale=0.3]{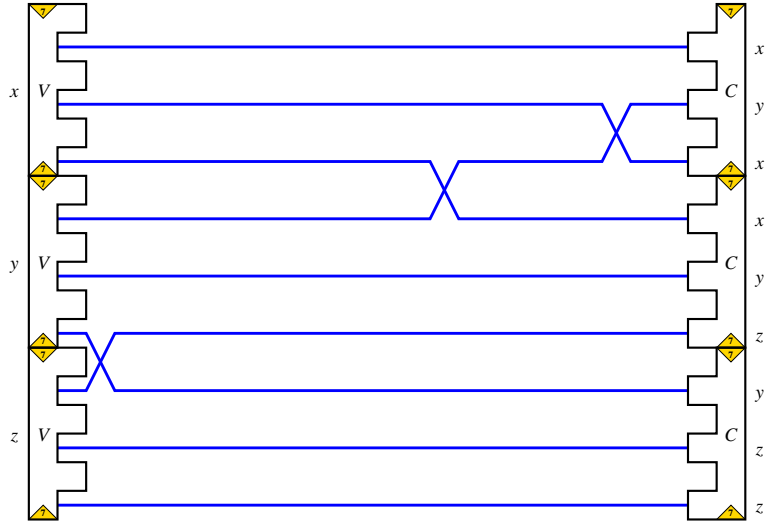}}
   \\
   \subfloat[Unique tiling]{\label{encode-2}\includegraphics[scale=0.3]{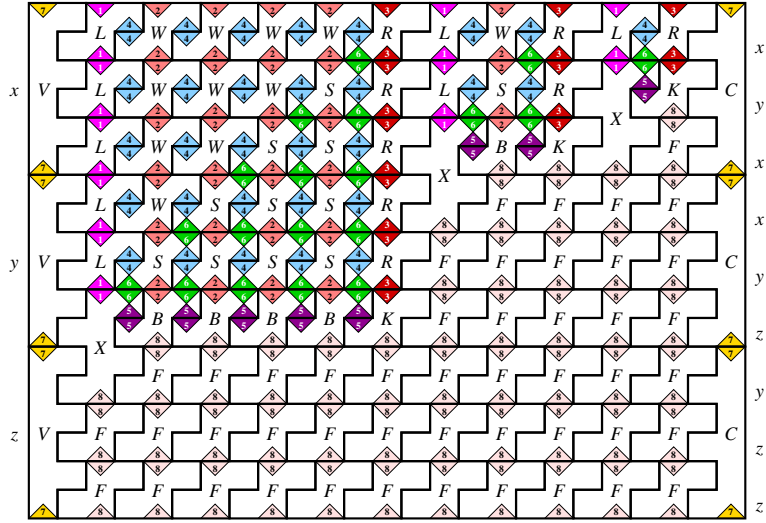}}
   \caption{A bigger example of the unique base tiling.}
   \label{diagram-2}
\end{figure}

\medskip

\section{Proof of the Second Reduction Lemma {\small(Lemma~\ref{lem-rect})}} \label{s:second-reduction}

\subsection{Basics}
In this section, we provide a further connection between Wang tiles and ordinary rectangular tiles
(by making a reduction from the latter to the former).
Recall that by Lemma~\ref{lem-reduction},
we can replace generalized Wang tiles with relational Wang tiles.

Without loss of generality, we may assume that the Wang relations are irreflexive,
that is, there is no tile $\t$ such that $H(\t,\t)$ or $V(\t,\t)$.
Indeed, suppose $\W$ is a set of Wang tiles.
Let $\W'=\{\t_i:i\in\{0,1\},\ \t\in\W\}$ be a doubled set of tiles.
Define its horizontal Wang relation as follows.
For $\t,\t'\in\W$ and $i,j\in\{0,1\}$,
let $H_{\W'}(\t_i,\t'_j)$ if and only if $H_{\W}(\t,\t')$ and $i\neq j$.
Its vertical Wang relation is defined analogously.
It is clear that the Wang relations of $\W'$ are irreflexive.
Moreover, a $\W$-tiling can be made into a $\W'$-tiling by adding subscripts to the tiles in a checkerboard fashion,
while the reverse can be done by ignoring the subscripts.
Of course, the same transformation is done on the boundary tiles as well.
Clearly this does not affect tileability nor the number of such tilings.

From now on, assume we are given a fixed set $\W$ of relational Wang tiles whose relations $H$ and $V$ are irreflexive.
Our goal is to produce a fixed set $\R$ of rectangular tiles with the following property:
Given any simply connected region $\G$ with specified boundary tiles,
we can produce (in linear time) a simply connected region $\G'$ such that $\G$
is $\W$-tileable if and only if $\G'$ is $\R$-tileable.  Moreover, the number of
$\W$-tilings of $\G$ will be the same as the number of $\R$-tilings of~$\G'$.

For simplicity, we first consider the case where we are given an $r\times c$
rectangular region $\G$ with specified boundary tiles.

\subsection{Expansion}
From this point on, we only consider tiling using rectangular tiles.
Fix $M$ and $\e$ to be positive integers.
Given a region $\G_0$, we obtain an \emph{$(M,\e)$-expansion} $\G$ by scaling $\G_0$ by a factor of $M$ and then
\emph{perturb} it by moving each corner vertex of the boundary curve of the region~$\G$, at most $\e$ in each
direction, such that $\G$ is still a region (with rectilinear edges).
Recall that a (rectangular) tile is just a simply connected region, thus the notion of $(M,\e)$-expansion of a tile is defined.
A tileset $\T$ is an \emph{$(M,\e)$-expansion} of a tileset $\T_0$ if each $\t\in\T$ is an $(M,\e)$-expansion of some $\t_0\in\T_0$.

A tiling $\p$ of a region $\G$ is an \emph{$(M,\e)$-expansion} of a tiling $\p_0$ of some region $\G_0$
if it can be obtained by dilating by a factor of $M$,
and then perturbing the tiles and the region by at most $\e$ as above.
Note that after scaling, each tile may grow or shrink in each dimension by at most $2\e$,
and can \emph{shift} around from its starting point by at most~$\e$.

Given a tileset $\T_0$ and an $(M,\e)$-expansion $\T$,
a region $\G$ \emph{respects the expansion} if there is a unique region $\G_0$ such that
any $\T$-tiling of $\G$ is an $(M,\e)$-expansion of a $\T_0$-tiling of~$\G_0$.

Intuitively, we will choose $M > 100\e$, say, and carefully perturb only a few tiles,
so that when consider tilings of regions respecting the expansion,
we can essentially predict what the new tiling can be based on the original tiling.

\newpage

\subsection{Rectangular tiles $\R_0$ and the region $\G_0(r,c)$} \label{group}
Consider the following tileset:
$$\R_0=\left\{f=R(34,11),\ w=R(31,14),\ s=R(10,10),\ h=R(11,31),\ v=R(14,34)\right\},$$
where $R(a,b)$ denotes a rectangle of height $a$ and width $b$ (see Figure~\ref{rect-tiles}).
For a rectangle $t$, write $\height t$ and $\width t$ for its height and width, respectively.
\begin{figure}[hbtp]
   \centering
   \subfloat
   {\includegraphics[scale=0.4]{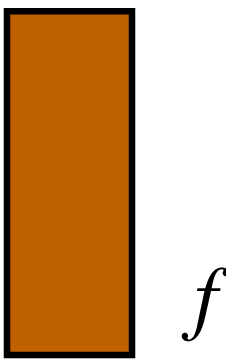}}
   \qquad\qquad
   \subfloat
   {\includegraphics[scale=0.4]{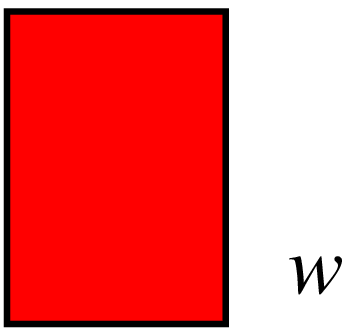}}
   \qquad\qquad
   \subfloat
   {\includegraphics[scale=0.4]{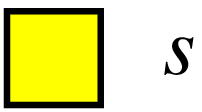}}
   \qquad\qquad
   \subfloat
   {\includegraphics[scale=0.4]{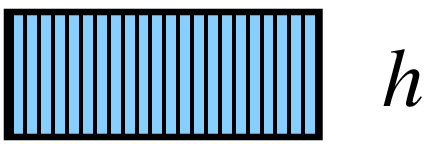}}
   \qquad\qquad
   \subfloat
   {\includegraphics[scale=0.4]{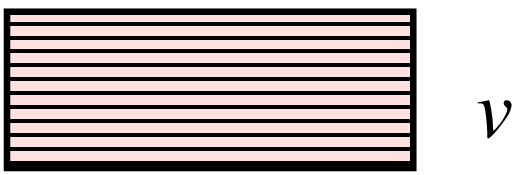}}
   \caption{Rectangular tiles $\R_0$.}
   \label{rect-tiles}
\end{figure}

Now consider the region $\G_0(r,c)$ defined as follows (see Figure~\ref{fig:boundary}).
On each vertical side, there are $r$ \emph{protrusions} of height $\height h$ and width $\width s$, separated by height $\height f$.
On each horizontal side, there are $c$ \emph{cavities} of width $\width v$ and height $\height s$, separated by width $\width f$.

\begin{figure}[hbtp]
   \includegraphics[scale=0.4]{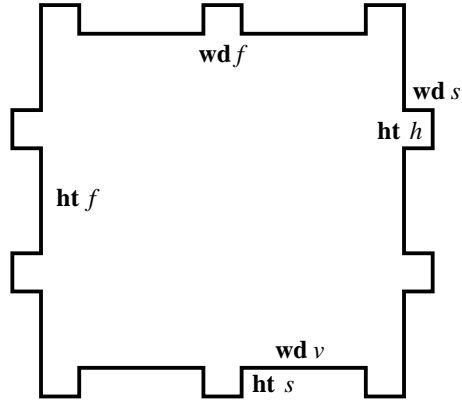}
   \caption{Boundary region $\G_0(2,2)$.}
   \label{fig:boundary}
\end{figure}

\begin{sublem}
\label{sl:unique-rect}
The unique $\R_0$-tiling of $\G_0(r,c)$ consists of $r$ rows and $c$ columns of the $w$ tile.
\end{sublem}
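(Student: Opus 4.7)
The plan is to prove Sublemma~\ref{sl:unique-rect} by a boundary-to-interior propagation: starting from the protrusions and cavities on the boundary of $\G_0(r,c)$, the specific dimensions of the tiles in $\R_0$ force a unique tile placement at each boundary feature, and we then iterate inward. The key observation is that the heights and widths appearing in $\R_0$ are drawn from the distinct values $\{10,11,14,31,34\}$, and that the protrusion dimensions ($\height h \times \width s = 11\times10$) and cavity dimensions ($\height s \times \width v = 10\times 34$) are matched precisely by these values, so that at every step there is essentially only one compatible tile.

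First I would analyze each boundary protrusion. Because a protrusion is only $11$ units tall and the region narrows immediately above and below it, any rectangular tile covering a protrusion must have height exactly $11$ (otherwise it would extend past the region into the gap); the only such tile in $\R_0$ is $h$. Thus each of the $2r$ protrusions is forced to contain a copy of $h$, flush against the tip of the protrusion and extending $31-10 = 21$ units into the interior. By a symmetric argument, each of the $2c$ cavities is adjoined by a copy of $v$, whose width of $34$ matches the cavity width exactly.

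Next I would propagate inward. With the $h$- and $v$-tiles fixed along the boundary, the untiled remainder decomposes into subregions whose dimensions uniquely match one of the remaining tiles: $f$ fills the vertical strips of height $\height f=34$ between consecutive protrusions along a side, $s$ fills the residual $10\times10$ squares that appear where protrusion-neighborhoods meet cavity-neighborhoods, and $w$ fills the main interior. By iterating these forced placements, the entire tiling is uniquely determined, and the $w$-tiles appear in the claimed $r\times c$ grid.

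The main obstacle will be the careful case analysis in the corners of $\G_0(r,c)$, where the rectangular outline meets a protrusion neighborhood and a cavity neighborhood simultaneously, and where one must verify that no alternative tile assignment can begin the propagation. I would handle this by enumerating the possible tiles at each concave corner and exploiting the fact that the separation values $\height f = 34$ and $\width f = 11$ are realized as the height (respectively width) of exactly one tile in $\R_0$. Once the base cases at the four corners are settled, the inductive step — extending the forced tiling by one row or one column at a time — is routine.
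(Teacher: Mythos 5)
Your overall strategy --- forced placements at the boundary features followed by inward propagation exploiting the distinctness of the dimensions --- is the same as the paper's, and your treatment of the protrusions is essentially correct (though ``height exactly $11$'' does not quite follow from the tile not overshooting the gap: an $s$ tile of height $10$ fits at the tip of a protrusion and must be excluded because it leaves a $1\times\width s$ strip that no tile can cover). The genuine gap is at the cavities. A cavity is \emph{not} symmetric to a protrusion: the cavity floor is a width-$34$ segment below which the region is wide open, so the geometry alone does not bound the tiles meeting it; it only becomes a bounded segment after the adjacent width-$\width f$ gaps are filled by $f$ tiles. And even then, ``the width matches $\width v$'' does not force a $v$ tile, because $34$ admits the competing decomposition $\width s+\width s+\width w=10+10+14$, so the floor could a priori be met by two $s$ tiles and a $w$ tile. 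The paper forces the $v$ indirectly: each $f$ tile creates a bounded \emph{vertical} segment of height $\height v+\height s=24$ along the side of the cavity, whose only decomposition into tile heights is $14+10$, giving a pair $(v,s)$; the wrong orientation of that pair is then excluded because it would leave a hole of width $\width v-\width s=24$ and height $\height s=10$, which only $s$ tiles could fill, impossible since $10\nmid 24$. Some argument of this kind is unavoidable, and your proposal does not contain it.

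The same issue recurs throughout your ``iterate inward'' step. After the boundary tiles are placed, the remainder does not decompose into subregions whose dimensions each match a single remaining tile; rather, at every stage one obtains a bounded segment of length $(\text{a tile dimension})+10$ that must be covered by a two-tile pair $(t,s)$ with $t\in\{f,w,h,v\}$, and the orientation of the pair is ambiguous. Sometimes the wrong orientation is killed immediately by a hole argument as above, but sometimes (e.g.\ for the $(w,s)$ pairs) it is only resolved \emph{retroactively}, after the next pair is placed and bounds a segment of height $\height w$. The paper organizes this as a sweep along anti-diagonals alternating $(w,s),(f,s)$ with $(v,s),(h,s)$; this bookkeeping, including the deferred orientation arguments, is the actual content of the proof and cannot be dismissed as routine. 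In particular, your assertion that ``$s$ fills the residual $10\times10$ squares'' describes the final answer rather than deriving it, and your placement of the $f$ tiles (against the vertical sides between protrusions) misses that they are forced by the width-$\width f$ gaps on the horizontal sides and that, for $r,c\ge 2$, further $f$ tiles occur in the interior where no boundary segment is available to force them directly.
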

\begin{proof}
Fix natural numbers $a=10$ and $b=1$.
The tiles introduced above can now be written as $f=R(3a+4b,a+b)$, $w=R(3a+b,a+4b)$, $s=R(a,a)$, $h=R(a+b,3a+b)$, and $v=R(a+4b,3a+4b)$.

We begin with a few definitions.
A horizontal (vertical) segment of a region is called \emph{bounded} if the region extends downward (to the right) on both sides of the segment.
For $t\in\{v,h\}$, a \emph{pair $(t,s)$} is the configuration of placing the tile $s$ above or below $t$, aligned on the left.
The \emph{orientation} of the pair is positive (negative) if $s$ is placed below (above).
Similarly, for $t\in\{w,f\}$, a pair $(t,s)$ is obtained by placing $s$ to the left or right of $t$, aligned on top.
The orientation is positive (negative) if $s$ is placed to the right (left).
A bounded segment is \emph{tiled by a tile} (\emph{pair}) if in all tilings, the tile (pair) is adjacent to the segment.

We will tile the region $\G_0(r,c)$ in steps, as indicated by the numbers labeled on Figure~\ref{order}.
Note that since $a>b$, each bounded horizontal segment of width $\width f$ on the top border must be tiled by $f$ tiles,
labeled~1.
Similarly on the left, the bounded vertical segments of height $\height h$ must be tiled by $h$ tiles,
labeled~2.
This creates a bounded vertical segment of height $\height v+\height s$ on the top left corner;
since $a>3b$, it is tiled by the pair $(v,s)$, labeled~3.
Since $a>4b$, it is obvious that it needs to be positively oriented, to avoid a hole of width
$\width v-\width s$ and height $\height s$, which cannot be filled.

\begin{figure}[hbtp]
   \includegraphics[scale=0.6]{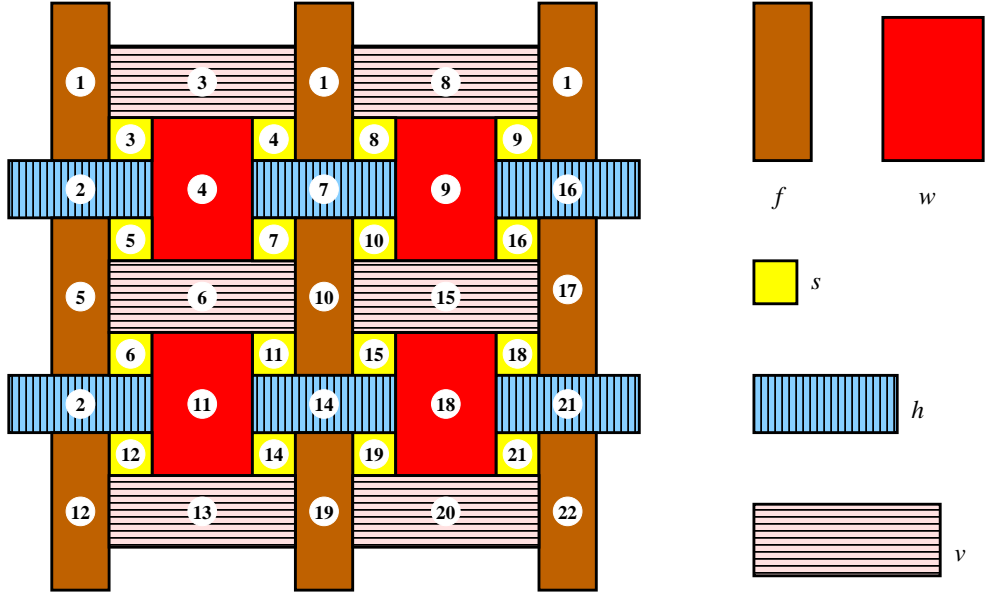}
   \caption{Unique base tiling labeled by order.}
   \label{order}
\end{figure}

Note that since $a>3b$, this creates a new bounded horizontal segment of width $\width w+\width s$, which is tiled
by the pair $(w,s)$, labeled~4.
If~$w$ is on the left, it will create a bounded horizontal segment of width $\width f+\width s$ to its left.
Otherwise, if $w$ is on the right, several $s$ will be forced to appear on the left and still create the same bounded segment.
Therefore, the $(w,s)$ pair creates the bounded segment, regardless of how it is oriented.

Since $a>3b$, this bounded horizontal segment of width $\width f+\width s$ is again tiled by an $(f,s)$ pair, labeled~5.
Like the $(v,s)$ pair above, since  $a>4b$, this needs to be positively oriented.
This creates the bounded vertical segment of height $\height v+\height s$, tiled by a pair $(v,s)$, labeled~6, as above.
In either orientation, it bounds the vertical segment of height $\height w$ above,
concluding that the $(w,s)$ pair (labeled 4) we placed above needs to be positively oriented.
Furthermore, this bounds the vertical segment of height $\height h+\height s$, again tiled by the pair $(h,s)$,
labeled~7.
As before, in either orientation, we have a bounded vertical segment of height $\height v+\height s$, which necessarily needs to be tiled by the positively oriented pair $(v,s)$, labeled~8.
This creates a bounded horizontal segment of width $\width w+\width s$.

We continue in like manner, working our way on the anti-diagonal from top right to bottom left.
Each time we place the pair $(w,s)$, forcing the adjacent pair $(h,s)$ placed in the previous stage to be positively oriented.
Then we place $(f,s)$, forcing the adjacent $(v,s)$ to be positively oriented as well.
This procedure repeats with $(w,s)$ and $(f,s)$ in an alternating fashion.
The last $(f,s)$ will be placed in positive orientation and creates a bounded vertical segment of height $\height v+\height s$.

Similarly, we work from bottom left to top right on the next anti-diagonal.
We alternate between placing $(v,s)$ and $(h,s)$ pairs, positively orienting the $(w,s)$ and $(f,s)$ pairs in the previous stage, respectively.
This continues until the entire region is filled.
\end{proof}

\subsection{Expansion $\R$ of $\R_0$}
We will now define a clever set of perturbed expansion tiles that will correspond to the relational Wang tiles.
Only the tiles $s$, $h$, and $v$ will have perturbations.
Let $\W=\{\t_1,\ldots,\t_n\}$ be the fixed set of relational Wang tiles with irreflexive horizontal and vertical Wang relations $H$ and $V$, respectively.
Fix $\e=5^n$ and $M=100\e$ for the remainder of the section.
Let $\R$ be an $(M,\e)$-expansion of $\R_0$ as follows:

For $t\in\{s,h,v\}$, let $t(a,b)$ be the scaled version of $t$ with height and width increased by $a$ and $b$, respectively.
Imagine that the $h$ and $v$ tiles can stretch horizontally and vertically, respectively,
and the $s$ tiles can stretch in both directions.
Then the $w$ tiles, having no perturbations, will only shift around a little (by at most $\e$).
The $f$ tiles will stay fixed, enforcing the global structure.
See Figure~\ref{fig:shifting}.
A $w$ tile will be shifted to the right and down by $5^i$ to \emph{represent} the Wang tile $\t_i$.
To restrict the shifts to only those sizes,
we replace $s$ with the appropriate perturbed versions.
Namely, for each $i$, introduce four tiles with perturbations $s(\pm 5^i,\pm 5^i)$, where all four combinations of signs are included.
To enforce the Wang relations,
for each $\t_i,\t_j\in\W$ such that $V(\t_i,\t_j)$ or $H(\t_i,\t_j)$, we introduce the perturbation $v(5^j-5^i,0)$ or $h(0,5^j-5^i)$, respectively.
This is the set $\R$ we will use.

\begin{figure}[hbtp]
   \subfloat{\includegraphics[scale=0.33]{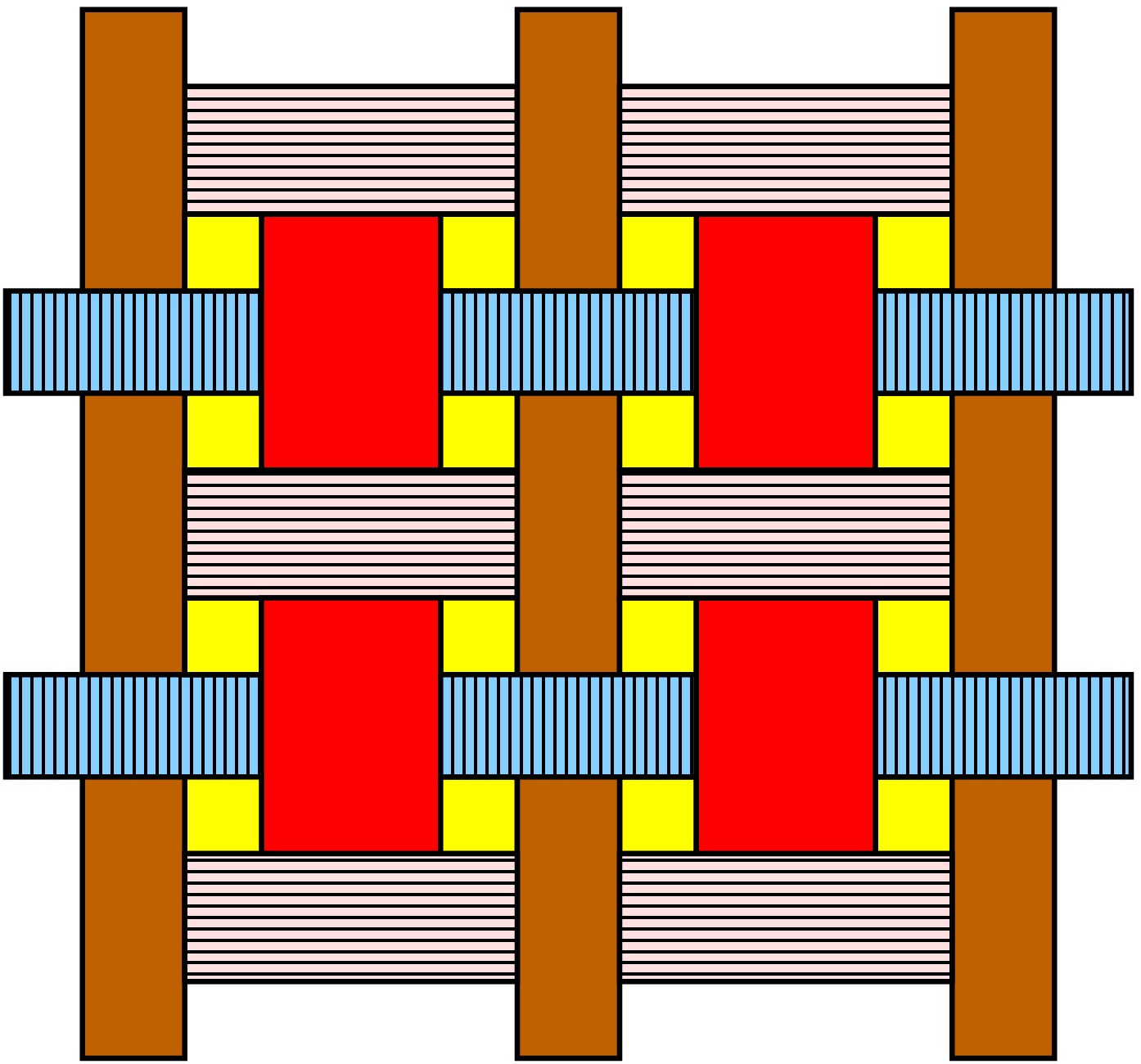}}
   \qquad
   \subfloat{\includegraphics[scale=0.33]{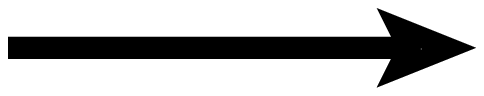}}
   \qquad
   \subfloat{\includegraphics[scale=0.33]{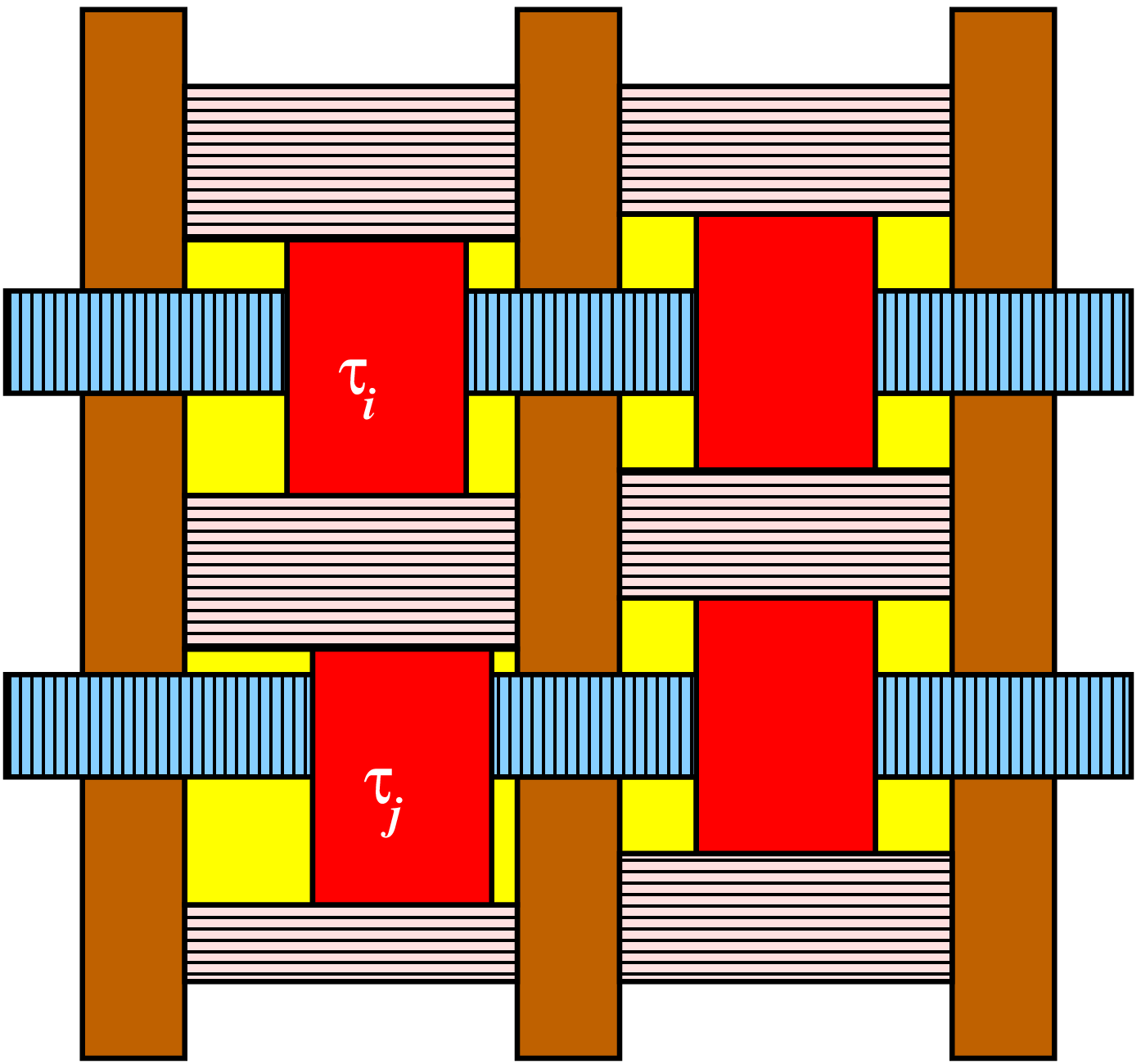}}
   \caption{Shifting an expansion of the unique tiling to represent Wang tiles.}
   \label{fig:shifting}
\end{figure}

\subsection{Rectangular tiling}
Obtain an $(M,\e)$-expansion $\G(r,c)$ of $\G_0(r,c)$ by scaling with a factor of $M$ and then perturbing it as follows.
Recall that there are $r$ protrusions on each vertical side and $c$ cavities on each horizontal side.
Each protrusion or cavity corresponds to a boundary tile of $\G$ in a natural way.
Perturb the protrusion or cavity to the right or down, respectively, by $5^i$ units if it corresponds to $\t_i$.

\begin{sublem}
The $(M,\e)$-expansion $\G(r,c)$ of $\G_0(r,c)$ respects the expansion $\R$ of~$\R_0$.
\end{sublem}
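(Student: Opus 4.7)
The plan is to mimic step by step the argument in the proof of Sublemma~\ref{sl:unique-rect}, exploiting the fact that the scaling factor $M = 100\e$ is much larger than the maximum perturbation $\e$, so perturbations cannot alter which expansion tile fits at any given position. Since the $\R_0$-tiling of $\G_0(r,c)$ is unique (all copies of $w$), it suffices to show that every $\R$-tiling of $\G(r,c)$, after contraction, reproduces this same combinatorial pattern; the unique $\G_0$ required in the definition of ``respects the expansion'' is then $\G_0(r,c)$ itself.

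First, I would observe that the $f$ tile in $\R$ carries no perturbation, and that the minimum difference between distinct side lengths of tiles in $\R_0$ is $b=1$. After dilating by $M$ and allowing perturbations of at most $2\e$ per side, the five possible side lengths fall into clearly separated ranges (each pair differing by at least $Mb - 2\e = 98\e$). Consequently, the bounded horizontal segments of dilated width $M(a+b) \pm O(\e)$ on the top border of $\G(r,c)$ must be tiled by $f$ tiles, and similarly the bounded vertical segments on the left border must be tiled by $h$ tiles.

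Next, I would follow the anti-diagonal sweep used in Sublemma~\ref{sl:unique-rect}. At each stage, the strict inequalities $a > b$, $a > 3b$, $a > 4b$ used in the unperturbed argument translate, after scaling, into strict comparisons between dilated lengths that differ by at least $Mb - O(\e) \gg \e$. Hence each pair placement, whether of type $(v,s)$, $(w,s)$, $(f,s)$, or $(h,s)$, is forced in the same combinatorial pattern and with positive orientation: a negatively oriented pair would leave a hole of dilated size at least $Mb - O(\e)$, which no expansion tile can fill since the smallest dimension of any tile in $\R$ is $Ma - \e = 999\e$.

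The main obstacle I anticipate is the bookkeeping: at each stage of the sweep one must verify that the newly exposed bounded segment has dilated length within $O(\e)$ of its unperturbed value, despite the accumulated perturbations of the previously placed tiles. The point is that each individual tile's perturbation is bounded by $\e$ per corner, and at every step the placement of the newly placed tile is locally pinned (up to perturbation) by the shape of the adjacent bounded segment it fills. Once this local rigidity is established inductively along the sweep, every $\R$-tiling of $\G(r,c)$ is forced to have the combinatorial type of the unique $\R_0$-tiling of $\G_0(r,c)$, which is precisely the statement that $\G(r,c)$ respects the expansion $\R$ of $\R_0$.
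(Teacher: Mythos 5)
Your proposal is correct and follows essentially the same route as the paper: the paper's own proof simply recalls the argument of Sublemma~\ref{sl:unique-rect}, observes that since $M=100\e$ the strict inequalities there are preserved under perturbations of size at most $O(\e)$, and notes that the unperturbed $f$ tiles pin down the global structure so the forced anti-diagonal sweep goes through verbatim. Your version spells out the same points (separation of side-length ranges by roughly $Mb-O(\e)$, impossibility of filling $O(M)$-sized holes, local rigidity along the sweep) in slightly more detail than the paper, which leaves these as ``easy details to the reader.''
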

\proof
Recall the argument in the proof of Sublemma~\ref{sl:unique-rect}.
As the inequalities are all satisfied,
the $f$ tiles are fixed and force the perturbations to stay local.
The $w$ tiles have two degrees of freedom.
They can move $\pm5^i$ in each direction, as regulated by the $s$ tiles.
Now note that the inequalities in the proof of Sublemma~\ref{sl:unique-rect} are preserved.
We leave the (easy) details to the reader.
\qed

\medskip

We now return to the proof of Lemma~\ref{lem-rect}.
It is clear that given a Wang $\W$-tiling of the rectangle $\G$ with boundary, we will get an $\R$-tiling of $\G(r,c)$.
Indeed, simply take the unique tiling of $\G_0(r,c)$ as afforded by Sublemma~\ref{sl:unique-rect},
scale by a factor of $M$,
and then shift each $w$ tile to the right and down by $5^i$ if it represents $\t_i$,
and adjust the other tiles in the obvious way.

Conversely, if we are given an $\R$-tiling of $\G(r,c)$, we wish to recover the $\W$-tiling of $\G$.
This is achieved using the following two sublemmas,
both of which are clear when all numbers are considered in base $5$;
we omit the (easy) details.

\begin{sublem}
The equation $5^i-5^j=5^k+5^\l$ does not admit a solution in $\N$.
\end{sublem}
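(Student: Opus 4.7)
The plan is to prove the sublemma by passing to base~$5$, where the obstruction becomes purely a digit-matching argument. First I would dispose of the trivial case: since the right-hand side $5^k+5^\l$ is strictly positive, any solution forces $i>j$, for otherwise $5^i-5^j\le 0$.

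Next I would write both sides in base~$5$ and simply compare digit patterns. For the left-hand side, factor $5^i-5^j = 5^j(5^{i-j}-1)$ and note that $5^{i-j}-1$ has base-$5$ expansion $\underbrace{44\cdots 4}_{i-j}$, so $5^i-5^j$ is represented by the digit string $\underbrace{44\cdots 4}_{i-j}\underbrace{00\cdots 0}_{j}$. In particular, every digit of the left-hand side lies in $\{0,4\}$. For the right-hand side, the base-$5$ expansion of $5^k+5^\l$ is either two $1$'s (when $k\ne\l$) or a single $2$ (when $k=\l$) with all other digits equal to $0$; so every digit lies in $\{0,1\}$ or $\{0,2\}$.

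The key step is the observation that these digit sets are disjoint from $\{0,4\}$ at any nonzero position, so by uniqueness of base-$5$ representation the two sides cannot be equal. This finishes the proof, so the entire argument reduces to one base-$5$ inspection.

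I do not expect any real obstacle: the only thing to check is that one does not accidentally allow $i\le j$ (handled by positivity) and that one correctly notes that $5^{i-j}-1$ is a string of $4$'s in base~$5$, which is immediate from the geometric series identity $5^{i-j}-1 = 4\sum_{m=0}^{i-j-1}5^m$. The proof therefore needs only a sentence or two once the base-$5$ viewpoint is adopted, matching the paper's remark that the statement is ``clear when all numbers are considered in base~$5$.''
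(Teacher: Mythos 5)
Your proof is correct and is exactly the argument the paper has in mind: the paper's entire ``proof'' is the remark that the statement is clear in base~$5$, and your digit-comparison (left side a block of $4$'s times a power of $5$, right side with nonzero digits only $1$ or $2$, plus the positivity check forcing $i>j$) is the standard way to fill in those omitted details. No gaps.
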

Therefore each $w$ tile will shift to the right and down (as opposed to shifting left or up),
and hence indeed represents a Wang tile $\t_i$ for some $i$.

\begin{sublem}
The equation $5^i-5^j=5^k-5^\l$ does not admit solutions in $\N$ except if $i=j$ or $i=k$.
\end{sublem}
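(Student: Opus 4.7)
The plan is to symmetrize the equation by moving negative terms across:
\[ 5^i + 5^\l \;=\; 5^j + 5^k, \]
which avoids having to split into cases based on the signs of $5^i - 5^j$ and $5^k - 5^\l$. It then suffices to prove the auxiliary claim that if $5^a + 5^b = 5^c + 5^d$ with $a,b,c,d \in \N$, then $\{a,b\} = \{c,d\}$ as multisets. Applied to the rearranged equation this yields $\{i,\l\} = \{j,k\}$, so we fall in exactly one of two cases: either $i = j$ (which then forces $\l = k$), or $i = k$ (which then forces $\l = j$). These match the two exceptions permitted by the statement.

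For the auxiliary claim I would use the $5$-adic valuation. Assume without loss of generality $a \le b$ and $c \le d$, and factor
\[ 5^a\bigl(1 + 5^{\,b-a}\bigr) \;=\; 5^c\bigl(1 + 5^{\,d-c}\bigr). \]
The factor $1 + 5^m$ is coprime to $5$ for every $m \ge 0$: it equals $2$ when $m = 0$, and satisfies $1 + 5^m \equiv 1 \pmod 5$ when $m \ge 1$. Taking $v_5$ of both sides therefore forces $a = c$, after which cancellation gives $5^{b-a} = 5^{d-c}$ and hence $b = d$. This finishes the claim.

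An equivalent and perhaps more visual approach is to read the conclusion directly off base-$5$ expansions: for $i > j$, the number $5^i - 5^j$ has base-$5$ representation $\underbrace{44\cdots 4}_{i-j}\underbrace{00\cdots 0}_{j}$, and the lengths of the trailing block of zeros and the block of fours uniquely recover $j$ and $i$. Combined with the trivial $i=j$ case, this handles all sign possibilities.

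I do not expect any real obstacle here; the argument is just the uniqueness of base-$5$ (equivalently, $5$-adic) representations, and the only point requiring a small amount of care is to phrase the equation symmetrically first so that the zero case ($i = j$) and the two sign cases ($i > j$ versus $i < j$) are absorbed into a single application of the auxiliary claim rather than being treated one at a time.
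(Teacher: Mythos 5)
Your argument is correct and is essentially the approach the paper intends: the paper simply asserts the sublemma is ``clear when all numbers are considered in base $5$'' and omits the details, while you supply them (via the $5$-adic valuation of $5^a(1+5^{b-a})$, equivalently the uniqueness of the base-$5$ pattern of $5^i-5^j$). The symmetrization to $5^i+5^\l=5^j+5^k$ is a clean way to absorb the sign cases, and the deduction of the two permitted exceptions from the multiset equality $\{i,\l\}=\{j,k\}$ is exactly right.
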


If a $w$ tile representing $\t_j$ is to the right of a $w$ tile representing $\t_i$,
then $h(0,5^j-5^i)$ must be in $\R$.
By the sublemma above, the differences $5^j-5^i$ are all distinct (recall that the Wang relations are irreflexive, so $i=j$ does not happen),
therefore we must have had $H(\t_i,\t_j)$ as part of the Wang relation.
Similarly for the vertical Wang relation $V$.
So by reading off the associated tile $\t_i$ from the shifts of each $w$ tile,
we get a Wang $\W$-tiling of~$\G$.

This completes the construction of~$\G_0(r,c)$ for the case when $\G$ is a rectangle.
For the general case, when $\G$ is a simply connected region, the proof follows
verbatim after replacing $\G(r,c)$ and $\G_0(r,c)$ by appropriate regions.

It remains to get the upper bound estimates on the number of rectangles involved in the construction.
Suppose we are given a set of $k$ ordinary Wang tiles using $c$ colors (on the boundary).
By Lemma~\ref{lem-reduction} we can equivalently consider a set of less than $k+4c$ relational Wang tiles.
To satisfy irreflexivity, we might need to double the set of tiles,
resulting in $n=\abs{\W}<2(k+4c)$ tiles.
When making $\R$, we will have one each of $f$ and $w$ tiles.
There will be $4n$ perturbed $s$ tiles and at most $n^2$ perturbed $h$ and $v$ tiles each.
In total, $$\abs{\R}\leq 2n^2+4n+2=2(n+1)^2\leq 8(k+4c)^2.$$
This concludes the proof of Lemma~\ref{lem-rect}.

\medskip

\section{Proof of theorems}\label{s:proofs}

\subsection{Proof of Theorem~\ref{thm-rect}}\label{ss:proof-thm-rect}
In the proof of Lemma~\ref{lem-wang} in Section~\ref{s:first-reduction},
we constructed the set $\W$ of $23$ generalized Wang tiles using~$9$ colors, such that \sct{\W} is \NP-complete.
It remains to count the total number of rectangles we obtain from the series of reduction constructions.

First, we compute the number of ordinary Wang tiles given by the transformation
in Lemma~\ref{lem-reduction}.  Observe that the total area of tiles in~$\W$ is
$9\cdot5+8\cdot4+4\cdot14=133$.
Therefore we can break them into $133$ ordinary Wang tiles by adding $133-23$ more colors.
But as these colors do not appear on the boundary, they need not be counted.
Hence, in Lemma~\ref{lem-rect}, we can take $k=133$ and $c=9$, thus giving us at
most $10^6$ rectangles.
\qed

\subsection{Proof of Theorem~\ref{thm-sharp}}\label{ss:proof-thm-sharp}
First, note that the reduction in the proof of Theorem~\ref{thm-rect} is parsimonious.
However, there seems to be no \SP-completeness result for the
\problem{\#Cubic Monotone $1$-in-$3$ SAT} problem.  This
is easy to fix by making a similar reduction from the \problem{2SAT} problem,
whose associated counting problem is \SP-complete (see \cite{Val}).

An instance of \problem{2SAT} is a set of variables and a collection of clauses.
Each clause is a disjunction of two literals, where each literal is either a variable or a negated variable.
The problem is to decide whether there is a satisfying assignment such that each clause has at least one true literal.
We modify the proof of Lemma~\ref{lem-wang} to obtain a parsimonious reduction from \problem{2SAT}.
By replacing the two variations of the variable tile by the ones shown in Figure~\ref{fig:2sat-v},
we may set up unnegated and negated copies of a single variable.
Indeed, with a sequence of $5(26)^{r-1}36(26)^{s-1}4$ as colors on the left vertical edge,
we create a list of $r+s$ variables, where the last $s$ are negated.
By replacing the three variants of the clause tile by the three obvious candidates in Figure~\ref{fig:2sat-c},
we force each clause to be satisfied.

\begin{figure}[hbtp]
   \centering
   \subfloat[tiles for variables]{\label{fig:2sat-v}\includegraphics[scale=0.4]{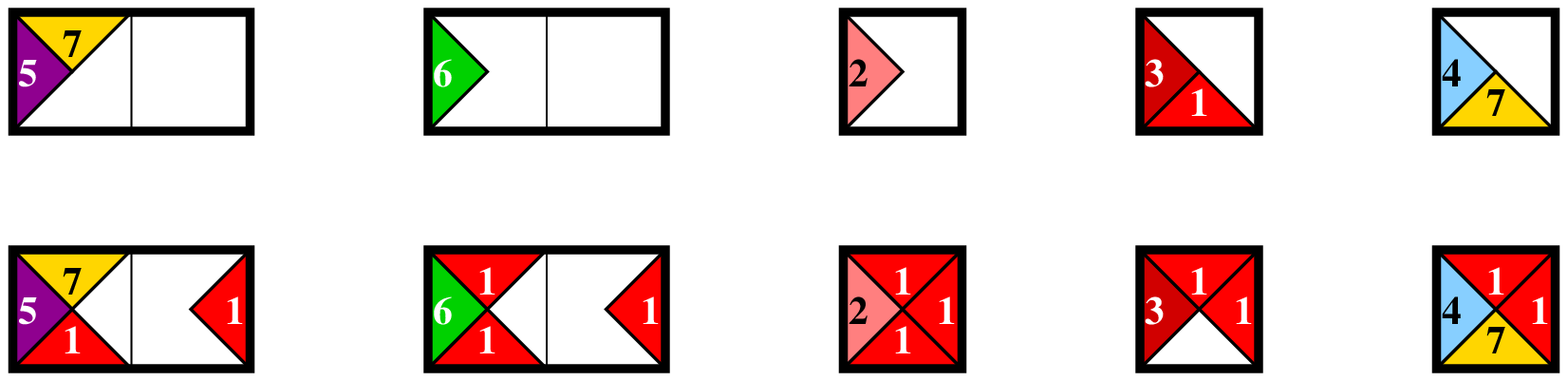}}
   \qquad\qquad
   \subfloat[clause tiles]{\label{fig:2sat-c}\includegraphics[scale=0.4]{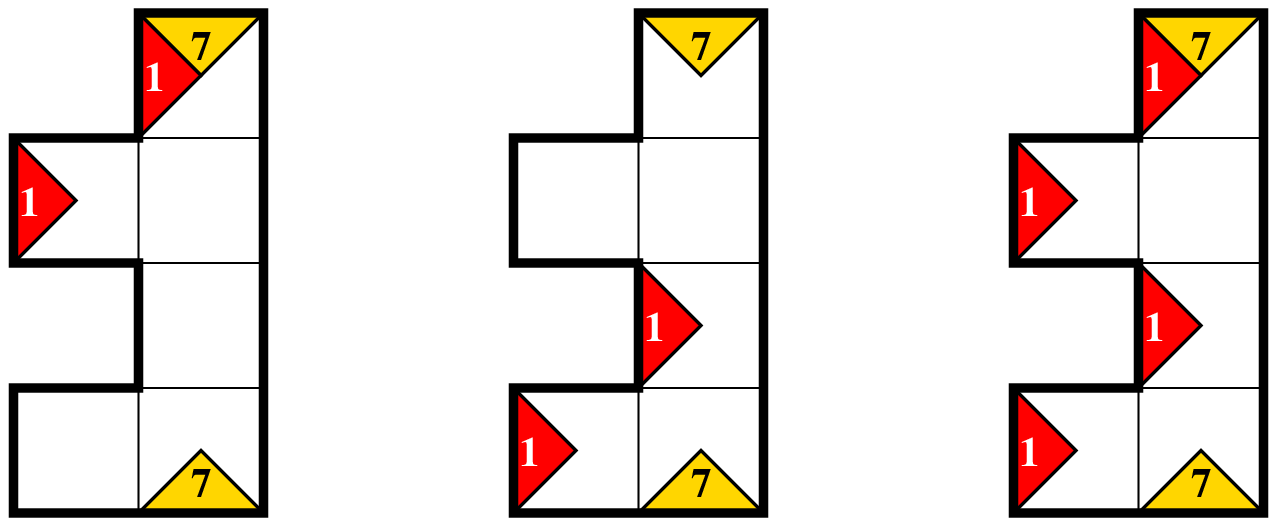}}
   \caption{Tiles for \problem{2SAT}.}
   \label{fig:2sat}
\end{figure}

Note that the modified tileset has a smaller total area, and has the same number of colors used on the boundary.
Therefore as in the proof of Theorem~\ref{thm-rect}, we apply Lemma~\ref{lem-rect} to conclude that $10^6$ rectangles suffice.
\qed


\medskip

\section{Final remarks and open problems}\label{s:fin}

\subsection{}\label{ss:fin-levin}
Theorem~\ref{thm-2-dim} was only announced in~\cite{GJ},
referencing an unpublished preprint.  Of course, now we have 
much stronger results.

A version of Lemma~\ref{lem-wang} was first announced in Levin's original
1973 short note regarding \NP-completeness~\cite{Lev}, but the proof has never been 
published.\footnote{Leonid Levin, personal communication.}
Although we were unable to find in the literature an explicit 
construction for either Lemma~\ref{lem-wang} or, equivalently, of 
Theorem~\ref{thm-ordinary}, we do not claim this result as ours, since
it became a folklore decades ago.  We include the proof for completeness,
and since we need an explicit construction.  An alternative proof is 
outlined in Subsection~\ref{ss:fin-moore} below.

Let us mention that using~\cite{Oll}, the number of tiles in 
Theorem~\ref{thm-ordinary} can be reduced to~$11$, 
but this reduction has no effect on the number of tiles in the main theorems.
Indeed, Theorem~\ref{thm-ordinary} is an immediate corollary of Lemma~\ref{lem-wang},
which is the one needed in the proof of main 
theorems~\ref{thm-rect} and~\ref{thm-sharp}.

\subsection{}\label{ss:fin-moore}
Our proof of Lemma~\ref{lem-wang} is completely elementary
and yields explicit bounds
(see also Subsection~\ref{ss:fin-levin}).
Let us sketch an alternate proof of the lemma,
using a non-deterministic \emph{universal Turing machine} (UTM).
It was suggested to us by Cris Moore.

Fix some non-deterministic universal Turing machine~$\M$.
Given two finite tape configurations and a natural number~$t$ (in unary),
it is \NP-complete to decide whether~$\M$ transforms the first tape
configuration to the second with~$t$ steps of computation.
Fix a finite set $\W$ of Wang tiles that simulate the space-time
computation diagram of~$\M$ (see \latin{e.g.}~\cite[\S7]{LeP}).
Encode the given tape configurations as the top and bottom
boundaries of a rectangular region with height~$t$.
This region is tileable by $\W$ if and only if~$\M$ transforms
the first tape configuration to the second in precisely~$t$ steps.
The details are straightforward.

Note that this method also proves the counting result.
Indeed, one can devise a UTM so that
there is a bijective correspondence between the accepting paths of
the UTM and of the Turing machine it is simulating.

\smallskip

The proof of Lemma~\ref{lem-wang} constructs a set of $23$ generalized Wang tiles ($133$ ordinary Wang tiles).
However, it is possible to decrease these numbers by elementary means.
After this paper was written, a modified construction by G\"{u}nter Rote
and the second author improves the number of generalized Wang tiles in Lemma~\ref{lem-wang} to~$15$,
which amounts to $35$ ordinary Wang tiles.
With other technical improvements this does reduce the $10^6$ bound in Theorem~\ref{thm-rect}
to a much friendlier~$117$.  The details are given in~\cite{Yang}.

We do not know if this approach leads to improvements in the number of Wang tiles in the lemma,
as this would depend on the smallest UTM.
Given an $m$-state $n$-symbol Turing machine with $k$ instructions,
the standard construction of Wang tiles to simulate such a Turing machine
yields more than $nm+n+k$ tiles.
As a perspective, among the smallest known UTMs,
this minimum is achieved by Rogozhin's $4$-state $6$-symbol machine with $22$ instructions,
which already yields more than $52$ tiles~\cite{Rog} (see also~\cite{NW}).
Unless a substantial progress is made in finding small UTMs, our elementary proof still gives better bounds.

\subsection{}
In the tiling literature, the original theoretical emphasis was
on tileability of the plane, the decidability and aperiodicity.
The problem was often stated in the equivalent language
of Wang tiles~\cite{Ber,Rob,Wang}.  Unfortunately, there does
not seem to be any standard treatment of the
\emph{finite Wang tiling} problems.  Although some equivalences
in the Lemma~\ref{lem-reduction} are routine, such as the reduction
in Figure~\ref{iii-iv}, others seem to be new.  We present full
proofs for completeness.

\subsection{}
Historically, finite tilings were a backwater of the tiling theory,
with coloring arguments being the only real tool~\cite{Gol1}.
On a negative (complexity) side, originally, the tileability
problem was studied for general regions, where the tiles were
part of the input.  The \NP-completeness of this most general
problem is given in~\cite[$\S$GP13]{GJ}.   When the set of tiles
is fixed, \NP-completeness was shown for general regions and
various fixed small sets of tiles (see~\cite{MR} and~\cite{BNRR}
building on the earlier unpublished work by Robson).

On the positive side, papers of Thurston~\cite{Thu} and
Conway \& Lagarias~\cite{CL} introduced the \emph{height function}
and the \emph{tiling group} interrelated approaches.
The key underlying idea is the use of combinatorial group theory applied
to the boundary word of the \emph{simply connected regions}, so the tilings
become \emph{Van Kampen diagrams} of the corresponding tiling group.
This approach allowed numerous applications to perfect matchings~\cite{Cha},
tile invariants~\cite{Korn,MP,Reid:homotopy}, tileability~\cite{She},
various local move connectivity results~\cite{KP,Rem1},
classical geometric problems~\cite{Ken}, applications
to colorings and mixing time~\cite{LRS}, \latin{etc}.  More relevant to this
paper, the breakthrough result by C.~and R.~Kenyon~\cite{KK} proved that
tileability with bars of simply connected (s.c.) regions can be decided in
polynomial time.  This result was further extended to all pairs of
rectangles by R\'{e}mila in~\cite{Rem2}, and by Korn~\cite{Korn} to
an infinite family of \emph{generalized dominoes}.  Our Main Theorem
puts an end to the hopes that these results can be extended to
larger sets of rectangles.

Note also that having s.c.~regions gives a speed-up for polynomial
problems.  For example, domino tileability is a special case of
perfect matching, solvable in quadratic time on all planar bipartite
graphs~\cite{LP}.  However, Thurston's algorithm is linear time
(in the area), for all s.c.~regions (see~\cite{Cha,Thu}).

\subsection{}\label{ss:fin-three-rect}
We conjecture that in the Main Theorem (Theorem~\ref{thm-rect}), the number of rectangles
can be reduced down to~3, thus matching the lower bound (R\'{e}mila's tileability algorithm
for the case of two rectangles).  As a minor supporting evidence in favor of this conjecture,
let us mention that the proofs in~\cite{KK,Rem2} are crucially based on
\emph{local move connectivity}, which fails for three general rectangles.  In the absence of
algebraic methods, there seem to be no other (positive) approach to tileability.

\subsection{}
This result of Main Theorem can be contrasted with a large body
of positive results on tiling rectangular regions with a fixed set
of rectangles.

\begin{thm}[``Tiling rectangles with rectangles'' Theorem~\cite{LMP}]
\label{thm-lmp}
For every finite set $\R$ of rectangular tiles, the tileability problem
of an $\ts [M\times N]$ rectangle can be decided in $O(\log M + \log N)$ time.
\end{thm}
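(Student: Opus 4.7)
The plan is to establish a \emph{finite description} of the tileable pairs and reduce the decision problem to elementary modular arithmetic.  Concretely, for any fixed finite set $\R$ of rectangular tiles, I would prove that the set
$$T(\R) \,=\, \bigl\{(M, N) \in \N^2 : \text{the } M \times N \text{ rectangle is } \R\text{-tileable}\bigr\}$$
is \emph{eventually biperiodic}: there exist positive integers $P, M_0, N_0$ depending only on $\R$ such that for $M \ge M_0$ and $N \ge N_0$, membership in $T(\R)$ depends only on the residues $M \bmod P$ and $N \bmod P$.  Granted this, the algorithm on input $(M, N)$ written in binary is immediate: if $M < M_0$ or $N < N_0$, read the answer off a finite precomputed table (of $O(1)$ size, since $\R$ is fixed); otherwise, compute $M \bmod P$ and $N \bmod P$ by standard long division in $O(\log M)$ and $O(\log N)$ bit operations respectively, and look up the answer in a fixed $P \times P$ table.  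This yields the stated $O(\log M + \log N)$ bound.

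To prove eventual biperiodicity, I would use a transfer-matrix argument.  Fix $N$ and scan a hypothetical tiling of the $M \times N$ rectangle from left to right.  At each integer abscissa $x = k$, the partial tiles of $\R$ crossing the vertical line $x = k$ form a \emph{frontier profile}; because the widths of tiles in $\R$ are bounded by a constant $w_0$ depending only on $\R$, the collection $\mathcal{S}$ of all possible frontier profiles is finite and depends only on $\R$, \emph{not} on $N$.  An $\R$-tiling of the $M \times N$ rectangle corresponds to an accepting walk of length $M$ in a finite automaton $\cA_N$ whose state set is contained in $\mathcal{S}$.  The standard pumping argument for finite automata then shows that the set of accepting lengths is eventually periodic, with a period bounded by a function of $|\mathcal{S}|$, and hence bounded \emph{uniformly in $N$}.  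Swapping the roles of $M$ and $N$ gives the analogous statement for the vertical direction, and taking a common period yields the desired joint eventual biperiodicity.

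The main obstacle is making the period $P$ and the thresholds $M_0, N_0$ genuinely uniform — a priori the automaton $\cA_N$ and its reachability structure vary with $N$.  The saving observation is the uniform bound $|\cA_N| \le |\mathcal{S}|$, which forces a $\R$-dependent but $N$-independent period.  One must additionally verify that pumping goes in \emph{both} directions, namely $(M, N) \in T(\R) \iff (M + P, N) \in T(\R)$ for $M \ge M_0$, not only one implication; this follows from a de~Bruijn--Kenyon style argument showing that a tileable strip of width $P$ can be excised from, and inserted into, any $\R$-tiling of a sufficiently long rectangle, by cutting along a "clean" vertical line whose frontier profile repeats (such a repetition being guaranteed by the pigeonhole principle once $M \ge |\mathcal{S}|$).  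Once these uniform bounds are in hand, the $O(\log M + \log N)$ complexity claim follows at once from the table-lookup algorithm sketched in the first paragraph.
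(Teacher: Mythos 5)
First, a point of reference: the paper does not prove Theorem~\ref{thm-lmp}. It is quoted from the unpublished manuscript \cite{LMP}, and the surrounding discussion only records that the proof there rests on Barnes' commutative-algebra results \cite{Bar1,Bar2}, the de~Bruijn--Klarner finite basis theorem \cite{DK}, and the transfer matrix method. So your proposal has to be judged on its own terms, and on those terms it has a genuine gap.

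The gap is the claim that the set $\mathcal{S}$ of frontier profiles ``is finite and depends only on $\R$, not on $N$.'' A frontier profile along the line $x=k$ must record, for each of the $N$ rows, how far the tile covering that row protrudes past the line; even with widths bounded by $w_0$, this gives on the order of $w_0^N$ distinct states, so the automaton $\cA_N$ has size exponential in $N$, not bounded independently of $N$. Consequently your pumping argument yields, for each \emph{fixed} $N$, eventual periodicity of $\{M : (M,N)\in T(\R)\}$ with a period and a threshold that a priori depend (exponentially) on $N$; the same is true of the excision/insertion argument, since the pigeonhole repetition of a profile is only guaranteed once $M\ge|\mathcal{S}_N|$. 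The uniformity of $P$, $M_0$, $N_0$ over all $N$ --- which is essentially the entire content of the theorem, and is precisely where \cite{LMP} needs the finite basis theorem (finiteness of the set of prime tileable rectangles, which every tileable rectangle refines into by guillotine cuts) together with Barnes' algebraic machinery --- is asserted in your last paragraph but not proved. A secondary issue: even granting biperiodicity for $M\ge M_0$ and $N\ge N_0$, the boundary cases (say $M<M_0$ with $N$ unbounded) do not fit into a finite precomputed table as written; one needs a separate eventual-periodicity statement in $N$ for each fixed small $M$ (true, since the corresponding set of $N$ is a numerical semigroup, but again requiring proof). In short, your first paragraph --- that uniform eventual biperiodicity implies an $O(\log M+\log N)$ algorithm by modular reduction and table lookup --- is the easy and correct half; the reduction of the theorem to that statement is sound, but the statement itself is exactly the hard part and your transfer-matrix argument does not establish it.
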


Note that Theorem~\ref{thm-lmp} has linear time complexity for
the rectangular regions written \emph{in binary}.
This result is based on the pioneer results by Barnes~\cite{Bar1,Bar2}
applying commutative algebra, the \emph{finite basis theorem}~\cite{DK}
(see also~\cite{Reid:klarner}), and the \emph{transfer matrix method}
(see \latin{e.g.}~\cite[Ch.~4]{Sta}).

It seems, tilings of rectangles have additional structure, which general
regions do not have.  See \latin{e.g.}~\cite{BSST,C+,Rob0} for assorted results
on the subject.  On the other hand,
when the tiles are part of the input,
deciding tileability can be \NP-hard, and the proof can be used to show
that counting tilings is \SP-hard.  Note that the results in~\cite{LMP}
only discuss tileability, not counting.  It would be interesting to
obtain general results on the local move connectivity and hardness of
counting results for tilings of rectangular regions with rectangles.

\subsection{}\label{ss:fin-two-rect-count}
Although counting perfect matchings in general graphs is
\SP-complete, for the grid graphs a \emph{Pfaffian formula} gives
a count for the number of domino tilings for any (not necessarily
simply connected) region; this formula can be applied in polynomial
time~\cite{LP} (see also~\cite{Ken-dimer}).
In a different direction, Moore and Robson~\cite{MR} conjecture that
already for two bars, the problem is \SP-complete  for general regions.
They note that the corresponding reductions in~\cite{BNRR,MR} are
not parsimonious.  Thus, until now, the \SP-completeness was open
for any finite set of rectangular tiles, even for general regions.

We make a stronger conjecture that for every tileset $\T$ of two bars $[1\times k]$ and
$[\ell \times 1]$, where $k, \ell \ge 2$, $(k,\ell) \ne (2,2)$, the
counting of tilings by~$\T$ of simply connected regions is \SP-complete.
In particular, the number~$10^6$ in Theorem~\ref{thm-sharp} can be
decreased to~2.  There is no direct evidence in favor of this,
except that the general combinatorial counting problems tend to be
\SP-complete unless there is a special algebraic formula counting them.
Furthermore, when it comes to tile counting, there seem to be no
direct benefit of simple connectivity of the regions, so such result
is likely to be equally hard as for general regions.
We refer to~\cite{Jer} for the introduction and references.

\subsection{}
By a simple modification of the Wang tiles, we can also get a
parsimonious reduction from~\problem{SAT}.
For that, first, we can introduce wire splitters and the NOT gate.
By doing so, we remove the ``cubic''  and ``monotone''
constraints, respectively.  These would play the same role as
crossover tiles, and require a separate color on the boundary for each.
This would also increase the set of tiles by introducing new variants
for the~$V$ and~$L$ tiles as well.  We omit the details.

We can then introduce the AND gate in a similar fashion, again with
a new control color on the top and new versions of the~$V$,~$C$
and~$L$ tiles.  This gives the embedding of \problem{SAT}.
This reduction is parsimonious in the same way as the reduction
in Theorem~\ref{thm-sharp}, which implies that the associated
counting problem is also \SP-complete.

Let us compute the total number of rectangles necessary for this
construction.   First, this would increase the number of Wang tiles from
$23$ to no more than $23\cdot 8$.  Then, the same argument as
above gives the $10^{8}$ bound in the number of rectangular tiles.
We omit the (easy) calculation and details.

\subsection{}\label{ss:fin-uniqueness}
The reductions in this paper can be used to prove \emph{uniqueness} results on
tileability with rectangles, i.e.~whether there exists a unique tiling
of a region with a given set of rectangular tiles. In~\cite{BNRR}, the problem
was completely resolved in the case of two bars.  An even simpler solution
follows from~\cite{KK} in this case.
Since all tilings are local move connected, taking the ``minimal tiling''
constructed by the algorithm in~\cite{KK} and trying all potential moves
gives an easy polynomial time test.
More generally, R\'{e}mila~\cite{Rem2} showed that for two general
rectangles one can go from one to another with certain non-local moves which
are easy to describe.  Again, since he produces the ``minimal tiling,'' his
algorithm can be used to decide unique tileability with two rectangles.

Now, our approach, via reduction from the general \problem{SAT} problem
(see above) shows that for a certain
finite set of rectangles, uniqueness of tilings of a simply connected
region is as hard as \problem{UNIQUE SAT}, which is \co-NP-hard and
has been extensively studied~\cite{BG,VV}.  This seems to be the first
result of this type.

\subsection{}\label{ss:fin-high-dim}
Although Theorem~\ref{thm-lmp} extends directly to
bricks in higher dimensions~\cite{LMP}, this is an exception rather
than the rule.  In fact, we recently showed that almost no other
positive tileability results extend to higher dimensions, even
Thurston's algorithm mentioned above (see~\cite{PY-domino}).

\vskip.7cm


\noindent
\textbf{Acknowledgements.} \,
We are grateful to Alex Fink, Jeff Lagarias, Leonid Levin, Cris Moore, G\"{u}nter Rote, and Damien Woods
for helpful conversations at various stages of this project.
We also thank the anonymous referees for attentive reading and useful comments on previous versions of this paper.
The first author is partially supported by the NSF and BSF grants.
The second author is supported by the NSF under Grant No.~DGE-0707424.

\newpage

\end{document}